\title{On z-superstable and critical configurations of chip-firing pairs}
 \author{Zach Benton}
 \address{Stanford University}
 \email{zbenton@stanford.edu}
 \author{Jane Kwak}
 \address{University of California, Los Angeles}
 \email{janekwak1@g.ucla.edu}
 \author{Suho Oh}
 \address{Texas State University}
 \email{suhooh@txstate.edu}
 \author{Mateo Torres}
 \address{University of Delaware}
 \email{mtorres@udel.edu}
 \author{Mckinley Xie}
 \address{Texas A\&M University}
 \email{mckinleyxie@tamu.edu}
\date{}
\begin{document}
    \begin{abstract}
    It is well known that there is a duality map between superstable configurations and critical configurations of a graph. This was extended to all M-matrices in (Guzm\`an-Klivans 2015). We show a natural way to extend this to all $(L,M)$-chip firing pairs introduced in (Guzm\`an-Klivans 2016). In addition, we study various properties of this map.        
    \end{abstract}
    \maketitle
    \section{Introduction}

Chip-firing is a game that takes place on a connected graph $G$, where chips are distributed across the vertices of $G$ and moved to adjacent vertices based on a straightforward rule. This dynamical system has a profound theory that links to various fields in mathematics and physics \cite{BakTangWiesenfeld88}, \cite{Dhar90}, \cite{Gabrielov93}, \cite{digraph}. For further details, please see the recent textbooks \cite{Klivans} and \cite{CorryPerk}.

Consider a simple graph $G$ with $n + 1$ vertices, where one vertex is designated as the \emph{sink}, and chips are placed on each non-sink vertex. The allocation of chips is described by an integer vector $\vec{c} \in \mathbb{Z}^{n}$, known as a \textit{(chip) configuration}. A non-sink vertex with chips equal to or greater than its degree can fire, distributing one chip along each incident edge to adjacent vertices. A configuration $\vec{c}$ is termed \emph{stable} if no non-sink vertex is able to fire.

For a connected graph $G$, any starting configuration will eventually reach stability via sequence of firings, with some chips moving to the sink vertex. Letting $v_1, ..., v_{n}$ represent the non-sink vertices of $G$, the chip-firing rules can be described using $L_G$, the $n \times n$ \textit{reduced Laplacian} matrix of $G$.

The outcome of firing a vertex $v_i$ on a chip configuration $\vec{c}$ is represented by $\vec{c} - L\vec{e_i}$, where $\vec{e_i}$ denotes the $i$-th standard basis vector. The matrix $L_G$ establishes an equivalence relation among the vectors in ${\mathbb Z}^{n}$, with $\vec{c}$ and $\vec{d}$ being \emph{firing equivalent} if $\vec{c} - \vec{d}$ lies within the image of $L_G$. This determines the \emph{critical group} of $G$, as described by $\mathcal{K}(G) := {\mathbb Z}^n/\Img L_G$ \cite{Biggs99}.

A configuration $\vec{c}$ is considered \emph{valid} (or \emph{effective}) if for all $i = 1, \dots, n$, the condition $c_i \geq 0$ holds. The goal is to identify notable valid configurations within each equivalence class $[\vec{c}] \in {\mathcal K}(G)$. It turns out that each $[\vec{c}]$ contains a distinct valid configuration that is \emph{critical}, which means it is stable and can be derived by iteratively firing from a sufficiently large configuration $\vec{b}$. Stabilizing the sum of two critical configurations results in a critical configuration.

One can show that each $[\vec{c}]$ contains a unique valid configuration that is \emph{superstable}, which means that it is stable under \emph{set-firings}, the simultaneous firing of a set of vertices.  A superstable configuration represents a solution to an energy minimization problem and aligns with the concept of a \emph{$G$-parking function}. 

For a connected graph $G$, there is a straightforward bijection linking the set of critical configurations and the set of superstable configurations, which are both correspondingly in bijection with the set of spanning trees of $G$. This simple map (taking a configuration, negating it coordinate-wise from a certain maximal configuration) is what is called the \newword{duality map} between superstable configurations and critical configurations, and our focus is to extend this map to more general models.

Recently, chip-firing has been extended to more general settings, where the reduced Laplacian of a graph is replaced by other matrices (see, for instance, \cite{gabrielov} and \cite{GuzKliMmatrices}). We use a matrix $M$ to define a firing rule that mimics the graphical setting: firing $v_i$ now takes a configuration $\vec{c}$ to $\vec{c} - M \vec{e_i}$, where $\vec{e_i}$ stands for the unit vector with $1$ at the $i$-th coordinate. For a well-defined notion of chip-firing we require that $M$ satisfies an \emph{avalanche finite} property, so that repeated firings of any initial configuration eventually stabilize in an appropriate sense.  The class of matrices with this property are known as \emph{$M$-matrices}, and can be characterized in a number of ways (see \cref{def:Mmatrix} below). In \cite{GuzKliMmatrices}, Guzm\'an and Klivans have shown that the chip-firing theory defined by an $M$-matrix leads to good notions of critical and superstable configurations. They further generalized this model by introducing an invertible matrix $L$, calling $(L,M)$ a \emph{chip-firing pair}, and extending the definition of critical and superstable configurations to that model \cite{GuzKlivans}.


Our goal is to extend the duality between critical and superstable configurations to $(L,M)$-chip firing pairs. This will answer Question~5.2 of \cite{cho2024} in a much more general sense, since signed graphs are special cases of the chip-firing pair model. In Section~$2$ we summarize the Guzm\'an-Klivans theory of chip-firing pairs and also review the previously known duality between superstable and critical configurations of $M$-matrices. In Section~$3 $ we provide our main result on extending the duality map to chip-firing pairs. In Section~$4$ we introduce the notion of frackets and use them to study some properties of the map discussed in the main result. 


\section{Prerequisites}
In this section we review the definition of chip-firing pairs. After that we review the duality map between superstable configurations and critical configurations for $M$-matrices. Then we will go over the tools developed in \cite{cho2024} that we will use to deal with the $z$-superstable and critical configurations of chip-firing pairs.

\subsection{Chip firing pairs}
In \cite{GuzKliMmatrices}, Guzm\'an and Klivans generalized the chip-firing on graphs to \newword{$M$-matrices}. $M$-matrices are used in various fields such as economics or scientific computing \cite{BurmanErn05}, \cite{CiarletRaviart73}, \cite{Leontief41}, \cite{Plemmons77}. Guzm\'an and Klivans further generalized this by introducing an invertible integer matrix $L$ in \newword{chip-firing pairs} $(L,M)$ introduced in \cite{GuzKlivans}. 

\begin{definition}
\label{def:Mmatrix}
Suppose $M$ is an $n \times n$ matrix such that $(M)_{ii} > 0$ for all $i$ and $(M)_{ij} \leq 0$ for all $i \neq j$. Then $M$ is called an (invertible) \emph{$M$-matrix} if any of the following equivalent conditions hold:
\begin{enumerate}
    \item $M$ is avalanche finite;
    \item The real part of the eigenvalues of $M$ are positive;
    \item The entries of $M^{-1}$ are non-negative;
    \item There exists a vector $\vec{x} \in {\mathbb R}^{n}$ with ${\vec x} \geq \vec{0}$ such that $M \vec{x}$ has all positive entries.
\end{enumerate}
\end{definition}

The pair $(L,M)$, an $M$-matrix $M$ together with an invertible integer matrix $L$, is called a \emph{chip-firing pair}. The relevant (chip) \emph{configurations} $\vec{c} \in \mathbb{Z}^{n}$ are simply integer vectors with $n$ entries, and chip-firing is dictated by the matrix $L$. In particular, $(M,M)$ recovers the chip-firing on $M$-matrices and $(L_G,L_G)$ when $L_G$ is the (reduced) Laplacian of a graph recovers the classical chip-firing model on graphs.

\begin{remark}
In this paper, we only focus on integral $M$-matrices (since the duality map for $M$-matrices given in \cite{GuzKliMmatrices} that we extend upon, is only given for integral matrices).
\end{remark}

\begin{definition}\label{defn:valid}
Suppose $(L,M)$ is a chip-firing pair.  A configuration $\vec{c}$ is \emph{valid} if $\vec{c} \in S^+$, where
\[S^+ = \{LM^{-1} \vec{x} : LM^{-1}\vec{x} \in \mathbb{Z}^{n}, \vec{x} \in \mathbb{R}^n_{\geq 0}\}. \]

Equivalently, a configuration $\vec{c}$ is valid if $ML^{-1}\vec{c} \in R^+$, where
\[R^+ = \{ \vec{x} \in \mathbb{R}^n_{\geq 0} : LM^{-1}\vec{x} \in \mathbb{Z}^{n} \}.\]
\end{definition}

In particular, for $(M,M)$, being valid is exactly the same as being a nonnegative integer vector. 

\begin{definition}
Suppose $(L,M)$ is a chip-firing pair, and suppose that $\vec{c} \in S^+$ is a valid configuration. A site $i \in \{1,\dots, n\}$ is \emph{ready to fire} if
\[\vec{c} - L\vec{e}_i \in S^+,\]
so that the vector obtained by subtracting the $i$th row of $L$ from $\vec{c}$ is also valid.

Similarly, suppose $\vec{x} \in R^+$. Then a site $i \in \{1,\dots, n\}$ is \emph{ready to fire} if 
\[\vec{x} - M\vec{e}_i \in R^+.\]

A configuration $\vec{c}$ (in $S^+$ or $R^+$) is \emph{stable} if no site is ready to fire.
\end{definition}

If $i$ is ready to fire, we declare that $\vec{b} = \vec{c} - L \vec{e}_i \in S^+$ is derived from $\vec{c}$ through a \emph{legal firing}. Repeating this process, a vector $\vec{a} \in S^+$ is said to be derived from $\vec{c}$ through a \emph{sequence of legal firings}. For a configuration $\vec{c} \in S^+$ (or conversely, $\vec{d} \in R^+$), we define $\text{stab}_{S^+}(\vec{c})$ (and $\text{stab}_{R^+}(\vec{d})$) as the resulting configuration after executing a series of legal firings until no site remains eligible to fire. Adapting the proof presented in \cite[Theorem 2.2.2]{Klivans}, it can be established that both $\text{stab}_{S^+}(\vec{c})$ and $\text{stab}_{R^+}(\vec{d})$ are uniquely determined. When it is clear whether we are dealing with $S^{+}$ or $R^{+}$, we use $\text{stab}(\vec{x})$ to refer to the stabilization of the configuration $\vec{x}$.


The definition of critical and superstable configurations in this model are as follows.

\begin{definition}
        Given an $(L,M)$ pair, a configuration $\vec c \in S^+$ is reachable if there exists some configuration $\vec d \in S^+$ satisfying:

        \begin{itemize}
            \item $\vec d - L \vec e_i \in S^+$ for all $1 \leq i \leq n$
            \item $\vec c = \vec d - \sum_{j = 1}^k L \vec e_j$ and $\vec d - \sum_{j = 1}^\ell L \vec e_j \in S^+$ for all $\ell < k$.
        \end{itemize}
        Given an $(L,M)$ pair, a configuration $\vec c \in S^+$ is critical if $\vec c$ is both stable and reachable.
    \end{definition}

Critical configurations are those that are both stable and reachable by chip-firing from a sufficiently large configuration. They are useful because they index the equivalence classes of the critical group.

\begin{definition}[{\cite[Definition 4.3]{GuzKliMmatrices}}]
A vector $f \in \ZZ^n$ with $f \geq 0$ is \newword{$z$-superstable} if for every $z \in \ZZ^n$ with $z \geq 0$ and $z \neq 0$ there exists $1 \leq i \leq n$ such that $f_i - (Lz)_i < 0$.
\end{definition}

It turns out that in the equivalence class given by the matrix $L$ in $S^{+}$, we can always find a unique representative that is critical and a unique representative that is $z$-superstable.

\begin{theorem}[{\cite[Theorems 3.5, 4.3, 5.5]{GuzKlivans}\label{thm:class}}]
Suppose $(L,M)$ is a chip-firing pair.  Then there exists exactly one $z$-superstable configuration and one critical configuration in each equivalence class $[\vec{c}]_L$.
\end{theorem}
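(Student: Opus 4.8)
The plan is to treat the critical and the $z$-superstable representatives separately, proving existence and then uniqueness in each case, with the abelian (Church--Rosser) and least-action properties of stabilization quoted above as the main engine. One recurring technical theme is that $S^+$ and $R^+$ are not the nonnegative orthants but ``staircase'' regions ($R^+=\mathbb{R}^n_{\ge 0}\cap ML^{-1}\mathbb{Z}^n$ and $S^+=LM^{-1}(R^+)$), so every membership claim and every ``this legal firing keeps the configuration valid'' claim has to be checked through the $R^+$-description, using $M^{-1}\ge\vec{0}$ and the existence of $\vec{x}\ge\vec{0}$ with $M\vec{x}>\vec{0}$ (conditions (3) and (4) of \cref{def:Mmatrix}), rather than by naive coordinatewise comparison. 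A first consequence of (4): choosing a rational such $\vec{x}$ and a large integer $t$ with $t\vec{x}\in\mathbb{Z}^n$, one gets $ML^{-1}(\vec{c}+tL\vec{x})=ML^{-1}\vec{c}+tM\vec{x}\in R^+$, so every class $[\vec{c}]_L=\vec{c}+L\mathbb{Z}^n$ meets $S^+$; pushing $t$ slightly further produces a \emph{maximal} $\vec{d}\in[\vec{c}]_L$, i.e. one with $\vec{d}-L\vec{e}_i\in S^+$ for every $i$, and even one that is ``arbitrarily large'' (far from the boundary of $S^+$ in every direction).

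\textbf{The critical configuration.} Existence is then immediate: set $\vec{c}^{*}:=\text{stab}_{S^+}(\vec{d})$ for a maximal $\vec{d}\in[\vec{c}]_L$. It is stable by definition, lies in $[\vec{c}]_L$ because stabilization subtracts only vectors of $L\mathbb{Z}^n$, and the very sequence of legal firings carrying $\vec{d}$ to $\vec{c}^{*}$ is exactly the data demanded by the definition of reachable, so $\vec{c}^{*}$ is critical. Uniqueness is the step I expect to be the main obstacle. The key preliminary is a burning-type lemma: if $\vec{c}$ is critical then $\vec{c}=\text{stab}_{S^+}(\vec{e})$ for every sufficiently large $\vec{e}\in[\vec{c}]_L$; this follows by taking the maximal configuration $\vec{d}$ that witnesses the reachability of $\vec{c}$ (so $\text{stab}_{S^+}(\vec{d})=\vec{c}$ by uniqueness of stabilization), writing $\vec{e}-\vec{d}=L\vec{u}=L\vec{u}^{+}-L\vec{u}^{-}$, passing to $\vec{v}:=\vec{e}+L\vec{u}^{-}=\vec{d}+L\vec{u}^{+}$, and using the least-action principle (the odometer of the huge configuration $\vec{v}$ dominates the fixed vector $\vec{u}^{+}$) to conclude $\text{stab}_{S^+}(\vec{e})=\text{stab}_{S^+}(\vec{v})=\text{stab}_{S^+}(\vec{d})=\vec{c}$. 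Granting this, let $\vec{c}_1,\vec{c}_2$ be critical with $\vec{c}_1-\vec{c}_2=L\vec{z}$; pick a large $\vec{e}_1\in[\vec{c}_1]_L$ with $\text{stab}_{S^+}(\vec{e}_1)=\vec{c}_1$, put $\vec{e}_2:=\vec{e}_1-L\vec{z}$ (a large configuration in $[\vec{c}_2]_L$, so $\text{stab}_{S^+}(\vec{e}_2)=\vec{c}_2$), and split $\vec{z}=\vec{z}^{+}-\vec{z}^{-}$ into nonnegative parts of disjoint support so that $\vec{w}:=\vec{e}_1+L\vec{z}^{-}=\vec{e}_2+L\vec{z}^{+}$. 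If $\vec{e}_1$ was taken far enough from the boundary then $\vec{w}\in S^+$, firing out of $\vec{w}$ according to $\vec{z}^{-}$ reaches $\vec{e}_1$, firing according to $\vec{z}^{+}$ reaches $\vec{e}_2$, and all intermediate vectors remain valid; by the abelian property $\text{stab}_{S^+}(\vec{w})$ equals both $\vec{c}_1$ and $\vec{c}_2$, forcing $\vec{c}_1=\vec{c}_2$. The genuine work here is the $S^+$-bookkeeping: justifying the burning lemma and the least-action step, and checking that the firing sequences out of $\vec{w}$ never leave $S^+$.

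\textbf{The $z$-superstable configuration.} I would argue in mirror image. Existence: among the valid configurations of $[\vec{c}]_L$, if some $\vec{f}$ is not $z$-superstable then $\vec{f}-L\vec{z}$ is again a valid configuration for some $\vec{z}\ge\vec{0}$, $\vec{z}\ne\vec{0}$; replacing $\vec{f}$ by $\vec{f}-L\vec{z}$ and iterating is a forward set-firing process, which terminates by the avalanche-finite property of $M$ together with a potential built from the $M$-matrix structure (a linear functional strictly decreased by each such move and bounded below on the region visited), and it can only terminate at a $z$-superstable configuration. Uniqueness: if $\vec{f},\vec{g}$ are $z$-superstable with $\vec{f}-\vec{g}=L\vec{z}$, split $\vec{z}=\vec{z}^{+}-\vec{z}^{-}$; the case $\vec{z}^{+}=\vec{z}^{-}=\vec{0}$ gives $\vec{f}=\vec{g}$ and the case $\vec{z}^{-}\ne\vec{0}$ is symmetric, so suppose $\vec{z}^{+}\ne\vec{0}$. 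Then $z$-superstability of $\vec{f}$ forces some coordinate of $\vec{f}-L\vec{z}^{+}=\vec{g}-L\vec{z}^{-}$ to be negative, and a ``first coordinate to drop below zero'' argument along a decomposition of the firing $\vec{z}^{+}$ into single-site firings, played off against $z$-superstability of $\vec{g}$ applied to $\vec{z}^{-}$, produces a contradiction. Alternatively — and this is perhaps the slickest route once the critical case is settled — uniqueness of the $z$-superstable representative can be deduced from uniqueness of the critical one through the coordinatewise-complementation duality, whose construction for $(L,M)$-pairs is precisely the subject of the next section. As before, the only subtle point is keeping every intermediate vector inside the relevant validity region, which is where the $R^+$-style description does the heavy lifting.
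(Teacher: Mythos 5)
First, a point of comparison: the paper offers no proof of \cref{thm:class} at all --- it is imported verbatim from Guzm\'an--Klivans --- so your attempt can only be measured against the standard arguments in that reference. Your treatment of the critical configuration follows that standard route (stabilize a maximal representative for existence; a burning-type lemma plus the abelian property for uniqueness) and, modulo the $S^+$-bookkeeping you yourself flag, is the right shape of argument.

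The genuine gap is in the $z$-superstable half, and it is not mere bookkeeping. Both of your key steps silently import sign hypotheses that $M$ satisfies but a general invertible $L$ does not. For termination of the forward set-firing process you need that $\vec{f}-L\vec{w}\ge \vec{0}$ with $\vec{w}\ge\vec{0}$ bounds $\vec{w}$; for an $M$-matrix this follows from $M^{-1}\ge 0$ (multiply the inequality through by $M^{-1}$), but $L^{-1}$ need not be nonnegative, and ``avalanche-finiteness of $M$'' does not transfer, because the moves subtract columns of $L$ while the coordinatewise nonnegativity in the definition of $z$-superstable does not interact with $ML^{-1}$. Likewise, the ``first coordinate to drop below zero'' uniqueness argument hinges on the implication $(L\vec{z}^{+})_i>0\Rightarrow z^{+}_i>0$, which requires $L_{ij}\le 0$ for $i\ne j$; the matrix $L$ of \cref{table:signed_toy_ss} already has a positive off-diagonal entry, so a coordinate can become negative at an unfired site and the disjoint-support contradiction evaporates. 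Your fallback --- deducing superstable uniqueness from critical uniqueness via coordinatewise complementation --- is circular in the context of this paper, whose entire point is that the naive duality fails for $(L,M)$-pairs and whose corrected duality in Section 3 takes \cref{thm:class} as an input. The actual proofs in Guzm\'an--Klivans characterize the $z$-superstable representative via energy minimization rather than via set-firing dynamics; some replacement of that kind is needed to close this half of your argument.
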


\begin{remark}
For chip-firing pairs, there is the notion of a $\chi$-superstable configuration as well as a $z$-superstable configuration. From \cref{thm:class}, it is the $z$-superstable configurations that have the same size as the critical configurations. For the remainder of the paper, we will focus only on $z$-superstable configurations, and we will call them the superstable configurations of the chip-firing pair, omitting the letter $z$.
\end{remark}

In the next subsection, we go over the duality that is known to exist when $L=M$.

\subsection{Duality for \texorpdfstring{$M$}{M}-matrices}

If we take a chip-firing pair $(M,M)$, it recovers the chip-firing on $M$-matrices studied in \cite{GuzKliMmatrices}. Chip-firing on $M$-matrices generalizes many properties and results of the classical chip-firing on graphs, and one of them is the duality between superstable and critical configurations.

Given an $M$-matrix, it turns out that there is a critical configuration that is a coordinate-wise greater or equal to every other critical configuration. We call this configuration $\cm$, given by taking all diagonal entries of $M$ minus one and forming a vector (in the classical case, this corresponds to having $\deg(v)-1$ chips for each vertex $v$). 

\begin{theorem}[\cite{GuzKliMmatrices}]
\label{thm:usualMduality}
Let $M$ be an $M$-matrix. Let $\cm$ denote the vector where each entry is coming from the corresponding diagonal entry $M_{ii}$ minus one. Then we have a bijection between superstable and critical configurations by the map $\vec{c} \rightarrow \cm - \vec{c}$.
\end{theorem}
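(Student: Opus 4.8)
The idea is to reduce to a one-sided statement. The map $\phi\colon \vec c \mapsto \cm - \vec c$ is an involution of $\ZZ^{n}$, hence injective; and by \cref{thm:class} the superstable configurations and the critical configurations are both finite and equinumerous (each class of $\ZZ^{n}/\Img M$ contributes exactly one of each). So it is enough to check that $\phi$ maps every superstable configuration to a critical configuration --- injectivity together with the equality of the two finite cardinalities then promotes this to the asserted bijection. (One could instead also prove that $\phi$ maps critical configurations to superstable ones and appeal only to $\phi$ being an involution; I expect the cardinality count to be the cleaner route.)

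So let $\vec c$ be superstable, and set $\vec d := \cm - \vec c$. First I would note that $\vec c$ is stable: applying the superstability condition to $\vec z = \vec e_{i}$ and using that the off-diagonal entries of $M$ are $\le 0$ while $\vec c \ge \vec 0$, the only coordinate the condition can make negative is the $i$th, which forces $c_{i} \le M_{ii}-1$. Thus $0 \le c_{i} \le M_{ii}-1$ for all $i$, so $0 \le d_{i} \le M_{ii}-1$ as well, and $\vec d$ is stable. It remains to show $\vec d$ is reachable.

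For this I would choose an integer vector $\vec\sigma > \vec 0$ large enough that $D_{i} \ge M_{ii}$ for every $i$, where $\vec D := \vec d + M\vec\sigma$ (such $\vec\sigma$ exists because $M$ is an $M$-matrix --- e.g. take $\vec\sigma$ a large integer vector close to a multiple of $M^{-1}\vec 1 > \vec 0$); since $M_{ii} > 0$ this makes $\vec D$ valid, and since the off-diagonal entries of $M$ are $\le 0$ it makes every site of $\vec D$ ready to fire. Stabilize $\vec D$ and write $\text{stab}(\vec D) = \vec D - M\vec\rho$ for its firing vector $\vec\rho \ge \vec 0$ (well-defined since $\text{stab}(\vec D)$ is unique and $M$ is invertible). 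The crux is a least-action estimate: $\vec\rho \le \vec\sigma$. To see it, run a legal firing sequence realizing the stabilization and consider the hypothetical first step whose running firing vector $\vec\tau$ would exceed $\vec\sigma$; at that step one would be firing a site $i$ with $\tau_{i} = \sigma_{i}$ while still $\vec\tau \le \vec\sigma$, so the configuration there equals $\vec d + M(\vec\sigma - \vec\tau)$ with $(\vec\sigma - \vec\tau)_{i} = 0$ and $\vec\sigma - \vec\tau \ge \vec 0$, whence its $i$th coordinate is $d_{i} + \sum_{j \ne i} M_{ij}(\vec\sigma - \vec\tau)_{j} \le d_{i} \le M_{ii}-1$ --- too small to let site $i$ fire, a contradiction. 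Given $\vec\rho \le \vec\sigma$, there are two cases. If $\vec\rho = \vec\sigma$ then $\text{stab}(\vec D) = \vec d$, so $\vec d$ has been obtained from the all-ready configuration $\vec D$ by a legal firing sequence and is therefore reachable, hence (being stable) critical. If $\vec\rho \ne \vec\sigma$, put $\vec z := \vec\sigma - \vec\rho$, a nonzero vector with $\vec z \ge \vec 0$; then $\text{stab}(\vec D) = \vec d + M\vec z = \cm - \vec c + M\vec z$ is valid and stable, so $0 \le (\cm - \vec c + M\vec z)_{i} \le M_{ii}-1$ for all $i$, and the upper inequality rearranges to $(M\vec z)_{i} \le c_{i}$, i.e. $\vec c - M\vec z \ge \vec 0$ --- contradicting that $\vec c$ is superstable. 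So only the first case occurs, $\vec d$ is critical, and the plan is complete.

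The step I expect to be the main obstacle is the least-action estimate $\vec\rho \le \vec\sigma$. Although it uses nothing beyond the sign pattern of $M$ and the bound $d_{i} \le M_{ii}-1$, it must be set up with care: one has to argue with an actual legal firing sequence realizing the stabilization, isolate the first move that would push the running firing vector past $\vec\sigma$, and exploit that ``site $i$ is ready to fire'' in $S^{+}$ reduces --- thanks to the nonpositive off-diagonal entries and the nonnegativity of the configuration --- to the single scalar inequality ``($i$th coordinate) $\ge M_{ii}$''. Everything else (that superstable implies stable, the choice of $\vec\sigma$, the two-case dichotomy, and finally extracting the bijection from injectivity plus the count in \cref{thm:class}) should be routine.
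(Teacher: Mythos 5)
The paper does not prove this statement; it is imported from Guzm\'an--Klivans \cite{GuzKliMmatrices} as \cref{thm:usualMduality}, so there is no in-paper argument to compare against. Your proof is correct and is essentially the standard one from that reference: superstability against the unit vectors $\vec z=\vec e_i$ pins each coordinate into $[0,M_{ii}-1]$, the least-action estimate $\vec\rho\le\vec\sigma$ shows the stabilization of $\vec d+M\vec\sigma$ is exactly $\vec d$ (any shortfall $\vec z=\vec\sigma-\vec\rho\ne\vec 0$ would contradict superstability of $\vec c$), and injectivity of the involution together with the equal counts from \cref{thm:class} upgrades ``superstable maps to critical'' to a bijection. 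All steps check out, including the existence of a suitable integer $\vec\sigma$ via $M^{-1}\ge 0$.
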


 
   \begin{example}
    \label{ex:unsignedkyle}
    Consider the following graph that has the reduced Laplacian to be $L_G = \begin{pmatrix}
        3 & -1 & -1 \\
        -1 & 2 & -1 \\
        -1 & -1 & 3
    \end{pmatrix}.$
    \begin{center}
        \begin{tikzpicture}[scale=2]
            \begin{scope}[every node/.style={circle,draw}]
\node (1) at (0,1) {$1$};
\node (2) at (1,1) {$2$};
\node (3) at (1,0) {$3$};
\node (q) at (0,0) {$q$};
\end{scope}
    \begin{scope}[every node/.style={draw=black},
every edge/.style={draw=black}]
\path (1) edge (3);
\path (1) edge (q);
\path (3) edge (2);
\path (3) edge (q);
\path (1) edge (2);
\end{scope}

        \end{tikzpicture}
    \end{center}
    The superstable configurations and critical configurations are given in the following table:
    \begin{center}\label{table:unsigned_toy_ss}
    \begin{tabular}{ c | c } 
        Superstables & Criticals \\
        \hline
        $(0, 0, 0)$ & $(2, 1, 2)$\\ 
        $(0, 0, 1)$ & $(2, 1, 1)$\\  
        $(0, 0, 2)$ & $(2, 1, 0)$\\
        $(0, 1, 0)$ & $(2, 0, 2)$\\
    \end{tabular}
    \begin{tabular}{ c | c }
        Superstables & Criticals \\
        \hline
        $(0, 1, 1)$ & $(2, 0, 1)$\\
        $(1, 0, 0)$ & $(1, 1, 2)$\\
        $(1, 1, 0)$ & $(1, 0, 2)$\\
        $(2, 0, 0)$ & $(0, 1, 2)$ 
    \end{tabular}
    \end{center}
    \end{example}

Notice that in the above example, we have a bijection between superstable configurations and critical configurations via the map $\vec{c} \rightarrow (2,1,2) - \vec{c}$. 

\begin{remark}
\label{rem:twobijects}
We have two different types of bijections that occur naturally between superstable configurations and critical configurations. The first is the one mentioned in \cref{thm:usualMduality} above. The second is the one obtained by mapping a superstable configuration to a unique critical configuration in the same equivalence class (guaranteed by \cref{thm:class}). In general, these maps are different, since the map $\vec{c} \rightarrow \cm - \vec{c}$ does not preserve the equivalence class in which one is located.
\end{remark}

Recall that our goal is to extend this to $(L,M)$ chip-firing pairs. The next subsection will show that the map $\vec{c} \rightarrow \cmax - \vec{c}$ does not work for chip-firing pairs.

    \subsection{The usual duality map does not work for chip-firing pairs}
    \label{subsec:classicdualnotwork}

Recall that the usual duality map between the superstable and 
critical configurations for graphs (and also $M$-matrices) is given by the map $\vec{c} \rightarrow \cm - \vec{c}$ for some fixed $\cm$. As can be seen in the example below, this does not work for $(L,M)$-pairs in general. The examples from this point throughout will be using $(L,M)$-pairs coming from a signed graph. The systematic study of signed graphs and their Laplacian was initiated by Zaslavsky in \cite{Zaslavsky} and also studied in \cite{MR666857}, \cite{MR4641710}, \cite{FundBapat}.

    \begin{example} \label{ex:signed_kyle} \label{table:signed_toy_ss}
    We take $L$ to be the (reduced) Laplacian of the following signed graph and $M$ to be the (reduced) Laplacian of the underlying unsigned graph. The Laplacian of the signed graph is simply obtained from the Laplacian of the underlying graph, by changing the signs of entries corresponding to negative edges. 

    \begin{figure}[ht]
        \centering
        \begin{minipage}{0.3\linewidth}
            \begin{center}
            \begin{tikzpicture}[scale=2]
            \input{tikz/kyle-signed.tex}
            \end{tikzpicture}
            \end{center}
        \end{minipage}
        \begin{minipage}{0.6\linewidth}
            $$M = \begin{pmatrix}
        3 & -1 & -1 \\
        -1 & 2 & -1 \\
        -1 & -1 & 3
    \end{pmatrix} \hspace{5mm}
    L = \begin{pmatrix}
        3 & 1 & -1 \\
        1 & 2 & -1 \\
        -1 & -1 & 3
    \end{pmatrix}$$
        \end{minipage}
        \label{fig:Signed-Kyle-S+}
    \end{figure}

    
    \begin{center}
    Configurations in $S^+$
    
    \renewcommand{\arraystretch}{1.2}
    \begin{tabular}{ c | c } \label{table:toy_ss}
        Superstables & Criticals \\
        \hline
        $(0, 0, 0) $ & $(6, 4, 2) $ \\
        $(1, 1, 0) $ & $(7, 5, 2) $ \\	
        $(4, 3, 2) $ & $(8, 6, 0) $ \\
        $(5, 4, 2) $ & $(9, 7, 0) $ \\
    \end{tabular}
    \begin{tabular}{c | c}
    Superstables & Criticals \\\hline
        $(2, 2, 0) $ & $(8, 6, 2) $ \\	
        $(3, 3, 0) $ & $(9, 7, 2) $ \\	
        $(3, 2, 0) $ & $(6, 4, 1) $ \\
        $(4, 3, 0) $ & $(7, 5, 1) $ \\
    \end{tabular}
    \begin{tabular}{c | c}
    Superstables & Criticals \\\hline
        $(5, 4, 0) $ & $(8, 6, 1) $ \\
        $(6, 5, 0) $ & $(9, 7, 1) $ \\
        $(6, 4, 0) $ & $(6, 5, 2) $ \\
        $(7, 5, 0) $ & $(7, 6, 2)$
    \end{tabular}
    \end{center}

    \end{example}

Notice that in the table of superstable and critical configurations of the chip-firing pair, the coordinate-wise maximal critical configuration is $(9,7,2)$. However if we take the superstable configuration $(5,4,2)$, the vector we get by applying the traditional duality map $(9,7,2) - (5,4,2) = (4,3,0)$ is not a critical configuration.

Even worse, there are many cases where $c_{max}$, the critical configuration that has coordinate-wise maximal entries does not even exist.

\begin{example}
\label{ex:nocmax}
Consider the $(L,M)$-pair coming from the signed graph below. The underlying graph is $C_6$, the cycle on six vertices.

\begin{figure}[ht]
    \centering
    \renewcommand{\arraystretch}{1.2}
    \begin{minipage}{.45\linewidth}
        \centering
        \begin{tikzpicture}[scale=.85]
            \input{tikz/c6-signed}
        \end{tikzpicture}    
    \end{minipage}
    \begin{minipage}{.45\linewidth}
        \centering
        \begin{tabular}{ c } 
        Critical configurations \\
        \hline
        (9, 15, 17, 15, 9) \\
        (12, 20, 23, 21, 13) \\
        (13, 21, 23, 20, 12) \\
        (7, 11, 12, 11, 7) \\
        (10, 16, 18, 17, 11) \\
        (11, 17, 18, 16, 10) \\
        \end{tabular}
    \end{minipage}
    \label{fig:Cycle-5}
\end{figure}

As can be checked from the table of critical configurations above, there is no critical configuration that is the maximal in all coordinates. 
\end{example}

\subsection{Finding the superstable/critical configurations of chip firing pairs.}
\label{subsec:lmpairstools}

In this subsection, we discuss an alternative way to find the superstable and critical configurations of $(L,M)$ chip-firing pairs, developed in \cite{cho2024}. 

Let $\sstab(M)$ denote the set of superstable configurations of a $M$-matrix $M$ and let $\crit(M)$ denote the set of critical configurations. However, beware that we are not going to be using $\sstab(L,M)$ to denote the set of superstable configurations of $(L,M)$ and the same for $\crit(L,M)$. It turns out that for configurations in $S^{+}$, it is important to look at their preimages in $R^{+}$. Given any vector $\vec{f}$, we use $\floor{\vec{f}}$ to denote the vector obtained from $f$ by taking the floor at every coordinate.

    \begin{theorem}[{\cite[Theorem 3.2]{cho2024}}]
    \label{thm:22floor}
        Given an $(L, M)$ pair, a configuration $\vec c \in S^+$ is superstable/critical if and only if $\floor{ML^{-1} \vec c}$ is a  superstable/critical configuration of $M$.
    \end{theorem}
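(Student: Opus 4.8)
The plan is to push the whole question into $R^{+}$, where the firing matrix is the integer $M$-matrix $M$, and then strip the fractional part off a configuration. Write $\vec{x}=ML^{-1}\vec{c}\in R^{+}$, the valid preimage of $\vec{c}$. By the two descriptions of validity in \cref{defn:valid}, $\vec{c}\mapsto\vec{x}$ is a bijection $S^{+}\to R^{+}$, and since $ML^{-1}(\vec{c}-Lz)=\vec{x}-Mz$ for every $z$ it intertwines the firing rules: $\vec{c}-Lz\in S^{+}\iff\vec{x}-Mz\in R^{+}$. Hence $\vec{c}$ is stable, reachable, critical, or superstable exactly when $\vec{x}$ is (in the transported sense), so it suffices to show for $\vec{x}\in R^{+}$ that $\vec{x}$ is superstable (resp.\ critical) iff $\floor{\vec{x}}$ is superstable (resp.\ critical) for $M$. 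Note $\floor{\vec{x}}=\floor{ML^{-1}\vec{c}}\ge 0$ since $\vec{x}\ge 0$.

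The engine is a commutation between flooring and firing. Fix $\vec{x}\in R^{+}$ and $z\in\mathbb{Z}^{n}_{\ge 0}$. Since $Mz$ is an integer vector, $\floor{\vec{x}-Mz}=\floor{\vec{x}}-Mz$, and coordinatewise $x_{j}-(Mz)_{j}\ge 0\iff\floor{x_{j}}-(Mz)_{j}\ge 0$; moreover $\vec{x}-Mz$ automatically satisfies the lattice condition of $R^{+}$, as $LM^{-1}(\vec{x}-Mz)=LM^{-1}\vec{x}-Lz\in\mathbb{Z}^{n}$. Therefore
\[\vec{x}-Mz\in R^{+}\iff\floor{\vec{x}}-Mz\in\mathbb{Z}^{n}_{\ge 0}.\]
Taking $z=\vec{e}_{i}$, site $i$ is ready to fire at $\vec{x}$ in $R^{+}$ exactly when it is ready to fire at $\floor{\vec{x}}$ for $M$, and then $\floor{\vec{x}-M\vec{e}_{i}}=\floor{\vec{x}}-M\vec{e}_{i}$. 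So the floor map carries any legal firing sequence from $\vec{x}$ in $R^{+}$ to a legal firing sequence from $\floor{\vec{x}}$ for $M$ through the same sites, and conversely; the fractional part $\{\vec{x}\}:=\vec{x}-\floor{\vec{x}}\in[0,1)^{n}$ rides along unchanged.

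Stability is the $z=\vec{e}_{i}$ case. For superstability, use that $\vec{c}\in S^{+}$ is superstable iff no nonzero $z\in\mathbb{Z}^{n}_{\ge 0}$ set-fires it out of $S^{+}$, i.e.\ $\vec{c}-Lz\notin S^{+}$ for all such $z$; transported this says $\vec{x}-Mz\notin R^{+}$ for all nonzero $z$, which by the displayed equivalence is exactly the assertion that $\floor{\vec{x}}$ is superstable for $M$. For criticality one adds reachability. If $\vec{x}$ is reachable in $R^{+}$, flooring a witnessing all-sites-ready configuration and its firing path gives (by the lemma, one $\vec{e}_{i}$ at a time) an all-sites-ready configuration for $M$ and a legal $M$-path ending at $\floor{\vec{x}}$; so $\floor{\vec{x}}$ is reachable, hence critical. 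Conversely, given a witness $\vec{g}=\vec{f}_{0}\to\cdots\to\vec{f}_{k}=\floor{\vec{x}}$ for reachability of $\floor{\vec{x}}$ (with $\vec{g}$ having every site ready for $M$), lift it by reattaching the fractional part: set $\vec{y}=\vec{g}+\{\vec{x}\}$ and run the same sequence, $\vec{f}_{0}+\{\vec{x}\}\to\cdots\to\vec{f}_{k}+\{\vec{x}\}=\vec{x}$. Each configuration along the way is $\ge 0$ (an $M$-legal configuration plus $\{\vec{x}\}\ge 0$) and lies in $R^{+}$, because each differs from $\vec{x}$ by $M$ times the multiset of not-yet-fired sites and so maps under $LM^{-1}$ to $\vec{c}$ plus an integer vector; and $\vec{y}-M\vec{e}_{i}=(\vec{g}-M\vec{e}_{i})+\{\vec{x}\}\ge 0$ shows $\vec{y}$ still has every site ready. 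Hence $\vec{x}$ is reachable, and being stable it is critical, so $\vec{c}$ is critical.

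The two points needing care are, first, fixing the exact form of the superstable condition for $(L,M)$-pairs: validity (i.e.\ $S^{+}$-membership) genuinely differs from coordinatewise nonnegativity here, so the robust statement is in terms of leaving $S^{+}$ under set-firing, which the commutation lemma then matches with leaving $\mathbb{Z}^{n}_{\ge0}$ on the $M$-side. Second, the backward direction for criticality, where an $M$-chip-firing history must be lifted back to $R^{+}$ by reattaching $\{\vec{x}\}$; the verification is that the $R^{+}$-lattice condition is inherited precisely because the lifted configurations differ from $\vec{x}$ by elements of $\Img M$. Everything else is routine bookkeeping on firing sequences.
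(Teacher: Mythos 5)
First, a caveat: the paper does not prove \cref{thm:22floor} itself --- it is imported verbatim from \cite[Theorem 3.2]{cho2024} --- so your argument can only be judged on its own terms. The critical half of what you wrote is sound. Your commutation lemma (for $\vec{x}\in R^{+}$ and $z\in\mathbb{Z}^{n}_{\ge 0}$, $\vec{x}-Mz\in R^{+}$ iff $\lfloor\vec{x}\rfloor-Mz\ge 0$, since the lattice condition is automatic and comparison against the integer vector $Mz$ commutes with flooring) is correct, and is essentially \cref{lem:equivsameblock} in disguise. Because ``ready to fire,'' stability, and reachability for $(L,M)$ really are defined through membership in $S^{+}$ (equivalently $R^{+}$), flooring a witnessing firing history and lifting one back by reattaching $\{\vec{x}\}$ does settle the critical case.

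The superstable half has a genuine gap. You take as your working definition that superstability of $\vec{c}$ means ``$\vec{c}-Lz\notin S^{+}$ for every nonzero $z\ge 0$,'' i.e.\ that $ML^{-1}(\vec{c}-Lz)=\vec{x}-Mz$ has a negative coordinate. But the theorem concerns $z$-superstability, whose definition is a coordinatewise condition on $\vec{c}-Lz$ itself: for every nonzero $z\ge 0$, some coordinate of $\vec{c}-Lz$ is negative. These are conditions on two different vectors related by the matrix $ML^{-1}$, which need not preserve signs, and the remark following \cref{thm:class} explicitly distinguishes the $S^{+}$-based notion ($\chi$-superstability) from $z$-superstability --- they do not even have the same cardinality in general. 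So your ``transport'' step assumes precisely the nontrivial content of the theorem in the superstable case: the equivalence between negativity of some coordinate of $\vec{c}-Lz$ and negativity of some coordinate of $\lfloor ML^{-1}\vec{c}\rfloor-Mz$. Your commutation lemma handles the passage from $\vec{x}$ to $\lfloor\vec{x}\rfloor$, but says nothing about the passage from the $\vec{c}$-coordinates to the $\vec{x}$-coordinates, which is where the real work lies; the superstable case needs a separate argument.
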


For example, we look at our running example coming from a signed graph.

    \begin{example}
    \label{ex:usualbad}
    Consider the signed graph studied in  \cref{table:signed_toy_ss}. The table lists all superstable and critical configurations in $S^{+}$, their preimage in $R^{+}$, and the floor of the preimage. We can notice that the floor of the preimages are the superstable and critical configurations of the underlying graph we saw in \cref{ex:unsignedkyle} (however, not all superstable/critical configurations of the underlying graph are used).
    

    
    \begin{center}
        \begin{tikzpicture}[scale=2]
\begin{scope}[every node/.style={circle,draw}]
\node (1) at (0,1) {$1$};
\node (2) at (1,1) {$2$};
\node (3) at (1,0) {$3$};
\node (q) at (0,0) {$q$};
\end{scope}
\begin{scope}[every node/.style={draw=black},
every edge/.style={draw=black}]
\path (1) edge (3);
\path (1) edge (q);
\path (3) edge (2);
\path (3) edge (q);
\end{scope}
\begin{scope}[every node/.style={text=red,font=\bfseries},
every edge/.style={draw=red}]
\path (1) edge node[above] {$-$} (2);
\end{scope}

        \end{tikzpicture}
    \end{center}

    \begin{center}\label{table:toy_stats}
    \renewcommand{\arraystretch}{1.2}
    \begin{tabular}{ c | c | c | c | c | c}
$\LM(\sstab)$ & $\sstab$                         & $\floor{\sstab}$ & $\LM(\crit)$& $\crit$                             & $\floor{\crit}$\\\hline
$ (0, 0, 0) $ & $(0, 0, 0) $                     & $ (0, 0, 0) $    & $(6,4,2)   $& $ (2, 0, 2) $                       & $ (2, 0, 2) $ \\
$ (1, 1, 0) $ & $(0, \nicefrac{1}{2}, 0) $           & $ (0, 0, 0) $    & $(7,5,2)   $& $ (2, \nicefrac{1}{2}, 2) $             & $ (2, 0, 2) $ \\
$ (4, 3, 2) $ & $(\nicefrac{2}{3}, \nicefrac{1}{3}, 2) $ & $ (0, 0, 2) $    & $(8,6,0)   $& $ (\nicefrac{8}{3}, \nicefrac{4}{3}, 0) $   & $ (2, 1, 0) $ \\
$ (5, 4, 2) $ & $(\nicefrac{2}{3}, \nicefrac{5}{6}, 2) $ & $ (0, 0, 2) $    & $(9,7,0)   $& $ (\nicefrac{8}{3}, \nicefrac{11}{6}, 0)  $ & $ (2, 1, 0) $ \\
$ (2, 2, 0) $ & $(0, 1, 0) $                     & $ (0, 1, 0) $    & $(8,6,2)   $& $ (2, 1, 2) $                       & $ (2, 1, 2) $ \\
$ (3, 3, 0) $ & $(0, \nicefrac{3}{2}, 0) $           & $ (0, 1, 0) $    & $(9,7,2)   $& $ (2, \nicefrac{3}{2}, 2)  $            & $ (2, 1, 2) $ \\
$ (3, 2, 0) $ & $(\nicefrac{4}{3}, \nicefrac{1}{6}, 0) $ & $ (1, 0, 0) $    & $(6,4,1)   $& $ (\nicefrac{7}{3}, \nicefrac{1}{6}, 1)  $  & $ (2, 0, 1) $ \\
$ (4, 3, 0) $ & $(\nicefrac{4}{3}, \nicefrac{2}{3}, 0) $ & $ (1, 0, 0) $    & $(7,5,1)   $& $ (\nicefrac{7}{3}, \nicefrac{2}{3}, 1) $   & $ (2, 0, 1) $ \\
$ (5, 4, 0) $ & $(\nicefrac{4}{3}, \nicefrac{7}{6}, 0) $ & $ (1, 1, 0) $    & $(8,6,1)   $& $ (\nicefrac{7}{3}, \nicefrac{7}{6}, 1) $   & $ (2, 1, 1) $ \\
$ (6, 5, 0) $ & $(\nicefrac{4}{3}, \nicefrac{5}{3}, 0) $ & $ (1, 1, 0) $    & $(9,7,1)   $& $ (\nicefrac{7}{3}, \nicefrac{5}{3}, 1) $   & $ (2, 1, 1) $ \\
$ (6, 4, 0) $ & $(\nicefrac{8}{3}, \nicefrac{1}{3}, 0) $ & $ (2, 0, 0) $    & $(6,5,2)   $& $ (\nicefrac{2}{3}, \nicefrac{4}{3}, 2) $   & $ (0, 1, 2) $ \\
$ (7, 5, 0) $ & $(\nicefrac{8}{3}, \nicefrac{5}{6}, 0) $ & $ (2, 0, 0) $    & $(7,6,2)   $& $ (\nicefrac{2}{3}, \nicefrac{11}{6}, 2) $  & $ (0, 1, 2) $
    \end{tabular}
    \end{center}
    \end{example}

Thanks to \cref{thm:22floor}, it is much more convenient to deal with the preimages of configurations, especially when trying to check if it is superstable or critical. Given a superstable configuration in $S^{+}$, we will denote its preimage in $R^{+}$ as \newword{superstable preimage} and for a critical configuration in $S^{+}$, we are going to denote its preimage in $R^{+}$ as \newword{critical preimage}. We are also going to use $\sstab(L,M)$ to denote the set of superstable preimages of a $(L,M)$ chip-firing pair and use $\crit(L,M)$ to denote the set of critical preimages.

    \section{The Duality map for chip-firing pairs}

In this section we establish the duality between the superstable configurations and critical configurations of $(L,M)$-pairs, that extends the canonical duality between superstable and critical configurations of $M$-matrices. We are mainly going to be dealing with the preimages of the configurations in $R^{+}$. The idea is to group up the preimages that have the same floor and then map such groupings into different groupings.


\begin{remark}
    Recall that for chip-firing pairs, the coordinate-wise maximal critical configuration does not necessarily exist, as was the case in \cref{ex:nocmax}. However, for $M$-matrices it does. Throughout the paper, given any $(L,M)$-pair and we write $\cm$, it stands for $\cm$ of $M$.
\end{remark}

Consider the map $\sst(M) \rightarrow \sst(M)$ that sends $\vec s \in \sst(M)$ to $\sstab(\cmax-\vec s)$, where $\sstab(\vec{v})$ means that we are taking the unique superstable configuration in the equivalence class that contains $\vec{v}$ (we use $\crit(\vec{v})$ for the unique critical configuration in the same class). This map relies only on $M$ and is completely independent of $L$. This map sends a superstable configuration to the unique superstable configuration in the same equivalence class (under $M$) as the image of $\vec s$ under the usual duality map. Now we are going to use the information of $L$, to modify this map. For any vector $\vec s$, we will use $\{\vec s\}$ to denote its fractional part: $\{\vec s\} := \vec s - \floor{\vec s}.$

\begin{definition}
\label{def:ivomap}
    For any chip-firing pair $(L,M)$, we define the map $\ivo : \sst(M) \rightarrow \sst(M)$:
    \[
    \ivo(\vec s) = \begin{cases}
     \vec s &\text{if $\{\LM2\vec s\:\} = \{\LM\cm\}$, } \\
    \sstab(\cmax-\vec s) &\text{otherwise.}
    \end{cases}
    \]
    
\end{definition}


\begin{proposition}
\label{prop:ivoinvol}
    The map $\ivo$ is an involution.
\end{proposition}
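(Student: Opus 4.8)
The plan is to show that $\ivo$ is a self-inverse map on $\sst(M)$, i.e.\ that $\ivo(\ivo(\vec s)) = \vec s$ for every superstable $\vec s$ of $M$. The definition splits into two cases according to whether $\{\LM2\vec s\} = \{\LM\cm\}$ (the ``fixed'' case) or not (the ``duality'' case), so the argument naturally breaks into checking that each case is preserved under applying $\ivo$ a second time.

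The fixed case is immediate: if $\{\LM2\vec s\} = \{\LM\cm\}$ then $\ivo(\vec s) = \vec s$, so $\ivo(\ivo(\vec s)) = \ivo(\vec s) = \vec s$. The substance is the duality case. Here I would set $\vec t = \sstab(\cmax - \vec s)$ and need two things: first, that $\vec t$ again falls into the duality case (so that $\ivo(\vec t) = \sstab(\cmax - \vec t)$ rather than $\vec t$), and second, that $\sstab(\cmax - \vec t) = \vec s$. For the second point, the classical $M$-matrix duality (\cref{thm:usualMduality}) tells us the map $\vec c \mapsto \cmax - \vec c$ is an involution on the set of \emph{superstable} configurations of $M$: applying $\sstab$ here is actually redundant on superstables because $\cmax - \vec s$ is already superstable when $\vec s$ is critical-dual, but more carefully, $\sstab(\cmax - \vec s)$ is the superstable representative of the class $[\cmax - \vec s]_M$, and then $\sstab(\cmax - \sstab(\cmax - \vec s)) = \sstab(\cmax - \vec s')$ where $\vec s'$ is firing-equivalent to $\cmax - \vec s$; since $\cmax - \vec s' \equiv \cmax - (\cmax - \vec s) = \vec s \pmod{\Img M}$, and $\vec s$ is already superstable, we get $\sstab(\cmax - \vec t) = \vec s$. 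So the self-inverse property of the underlying map on $\sst(M)$ is exactly Theorem~\ref{thm:usualMduality} combined with uniqueness of superstable representatives (\cref{thm:class} for the case $L=M$).

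The main obstacle is the first point: verifying that the ``duality'' case is closed under $\ivo$, i.e.\ that if $\{\LM2\vec s\} \neq \{\LM\cm\}$ then also $\{\LM2\vec t\} \neq \{\LM\cm\}$ for $\vec t = \sstab(\cmax - \vec s)$. Equivalently (taking contrapositive), if $\{\LM2\vec t\} = \{\LM\cm\}$ then $\{\LM2\vec s\} = \{\LM\cm\}$. The key observation I would use is that $\vec t$ and $\cmax - \vec s$ are firing-equivalent under $M$, so $\vec t = \cmax - \vec s + M\vec k$ for some integer vector $\vec k$; then $ML^{-1}\vec t$ and $ML^{-1}(\cmax - \vec s) + \vec k$ differ by... wait, more directly, $LM^{-1}\vec t = LM^{-1}(\cmax - \vec s) + \vec k$, so $\{LM^{-1}\vec t\} = \{LM^{-1}(\cmax - \vec s)\} = \{LM^{-1}\cmax - LM^{-1}\vec s\}$. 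I want to relate the fractional part of $LM^{-1}\vec s$ to that of $LM^{-1}\vec t$ via the fractional part of $LM^{-1}\cmax$. Concretely, $\{LM^{-1}\vec t\} = \{LM^{-1}\cmax\}$ iff $\{LM^{-1}(\cmax - \vec s)\} = \{LM^{-1}\cmax\}$ iff $\{LM^{-1}\vec s\}$ has integer... hmm, iff $LM^{-1}\vec s \in \ZZ^n + (\text{something})$. Let me restate: $\{LM^{-1}\vec t\} = \{LM^{-1}\cm\}$ $\iff$ $\{LM^{-1}\cm - LM^{-1}\vec s\} = \{LM^{-1}\cm\}$ $\iff$ $LM^{-1}\vec s \in \ZZ^n$. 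But $\vec s$ is a superstable \emph{configuration} of $M$ (an integer vector), and whether $LM^{-1}\vec s \in \ZZ^n$ is precisely the condition relevant to $S^+$. The cleaner route is: observe $\{LM^{-1}\vec s\} = \{LM^{-1}\cm\} \iff LM^{-1}(\cm - \vec s) \in \ZZ^n \iff \{LM^{-1}(\cm-\vec s)\} = 0$, and since $\sstab$ preserves the class $[\cdot]_M$ which maps to the class $[\cdot]_L$ after applying $LM^{-1}$ (this is where one uses that $M$-firing-equivalence implies $LM^{-1}$-image differs by $\Img L \cap \ZZ^n$), we get $\{LM^{-1}\vec t\} = \{LM^{-1}(\cm - \vec s)\}$. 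Hence $\{LM^{-1}\vec t\} = \{LM^{-1}\cm\} \iff \{LM^{-1}(\cm - \vec s)\} = \{LM^{-1}\cm\} \iff \{LM^{-1}\vec s\} = \{0\} \iff \{LM^{-1}\vec s\}=\{LM^{-1}\cm\}$ provided $\{LM^{-1}\cm\} = 0$; if instead $\{LM^{-1}\cm\}\neq 0$ the chain still closes up symmetrically because the roles of $\vec 0$ and $\cm$ are governed by the same additive relation. I would carefully check this equivalence, being careful that $\sstab$ does not change the relevant fractional part — which follows because adding an element of $\Img M$ to an integer vector $\vec v$ changes $LM^{-1}\vec v$ by an element of $\Img L \subseteq \ZZ^n$, hence does not change $\{LM^{-1}\vec v\}$. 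Once this equivalence of the two conditions is in hand, symmetry of the whole setup gives the involution property, completing the proof.
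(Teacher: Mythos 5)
Your overall strategy is the same as the paper's: in the ``otherwise'' case reduce $\ivo(\ivo(\vec s)) = \sstab(\cm - \sstab(\cm-\vec s))$ to $\sstab(\cm - (\cm - \vec s)) = \sstab(\vec s) = \vec s$, using that $\sstab(\cdot)$ depends only on the $M$-equivalence class of its argument and that a superstable $\vec s$ is its own superstable representative. You are in fact more careful than the paper on one point it silently skips: one must check that the two cases of \cref{def:ivomap} are each closed under $\ivo$, i.e.\ that $\vec t := \sstab(\cm - \vec s)$ satisfies $\{\LM 2\vec t\:\} = \{\LM\cm\}$ if and only if $\vec s$ does. Identifying this as the crux is the right instinct.

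However, your verification of that crux does not go through as written, because midway you drop the factor of $2$: you compare $\{\LM\vec t\:\}$ with $\{\LM\cm\}$ instead of $\{\LM 2\vec t\:\}$ with $\{\LM\cm\}$, which is why you land on the condition ``$\LM\vec s \in \ZZ^n$'' and a conclusion that only holds ``provided $\{\LM\cm\}=0$,'' followed by an appeal to symmetry that is not an argument. The correct computation is short and unconditional: since $\vec t \equiv_M \cm - \vec s$, we have $2\vec t = 2\cm - 2\vec s + M\vec k'$ for some integer vector $\vec k'$, hence $\LM 2\vec t = 2\,\LM\cm - \LM 2\vec s + L\vec k'$ (note also that $\LM M\vec k = L\vec k$, not $\vec k$ as in your displayed identity), so $\{\LM 2\vec t\:\} = \{2\,\LM\cm - \LM 2\vec s\}$. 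Therefore $\{\LM 2\vec t\:\} = \{\LM\cm\}$ iff $2\,\LM\cm - \LM 2\vec s - \LM\cm \in \ZZ^n$ iff $\LM\cm - \LM 2\vec s \in \ZZ^n$ iff $\{\LM 2\vec s\:\} = \{\LM\cm\}$, with no side condition. Substituting this for your chain of equivalences, your proof is complete and matches the paper's, with the worthwhile case-closure check made explicit.
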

\begin{proof}
    If $\{\LM2\vec s\:\} = \{\LM\cm\}$, we have the identity map. If that condition does not hold, then we have
    \begin{align*}
    \ivo(\ivo(\vec s)) &= \sstab(\cm-\sstab(\cm-\vec s))\\
    &= \sstab(\cm-(\cm-\vec s))\\
    &= \sstab(\vec s)\\
    &= \vec s
    \end{align*}
    Therefore, $\ivo(\ivo(\vec s)) = \vec s$.
\end{proof}

In the special case where $L=M$ (this recovers the usual chip-firing on $M$-matrices, and in particular when $M$ is the Laplacian of a graph, the classical chip-firing) the above involution is simply the identity map: since $\LM2\vec s$ is an integer vector for any integer vector $\vec s$.

\begin{lemma}
\label{lem:equivsameblock}
Let $\vec{a},\vec{b}$ be integer vectors such that they are equivalent under $M$. Let $\vec{f}$ be a vector such that every entry $f_i$ satisfies $0 \leq f_i < 1$. Then $\vec{a}+\vec{f} \in R^{+}$ if and only if $\vec{b}+\vec{f} \in R^{+}$.
\end{lemma}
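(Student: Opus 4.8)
\emph{Proof plan.} The plan is to unfold membership in $R^{+}$ into its two defining requirements and to check that the two changes in play --- adding the fractional vector $\vec{f}$, and replacing $\vec{a}$ by the $M$-equivalent vector $\vec{b}$ --- each leave one of those requirements intact. Recall that $\vec{x} \in R^{+}$ means both (i) $\vec{x} \geq \vec{0}$ and (ii) $LM^{-1}\vec{x} \in \mathbb{Z}^{n}$.

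First I would deal with the positivity condition (i). Because $0 \leq f_{i} < 1$ for every $i$ and $\vec{a}$ is an integer vector, the inequality $a_{i} + f_{i} \geq 0$ holds precisely when $a_{i} \geq 0$: if $a_{i} + f_{i} \geq 0$ then $a_{i} > -1$, and integrality of $a_{i}$ forces $a_{i} \geq 0$, while the reverse implication is immediate. Hence $\vec{a} + \vec{f} \geq \vec{0}$ exactly when $\vec{a} \geq \vec{0}$, and the same statement holds for $\vec{b}$. So condition (i) for the two shifted vectors reduces to the shift-free positivity of $\vec{a}$ and of $\vec{b}$ (which coincide when, as in the situations where the lemma gets used, $\vec{a}$ and $\vec{b}$ are non-negative).

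The substantive step is condition (ii), and here the hypothesis that $\vec{a}$ and $\vec{b}$ are equivalent under $M$ is exactly what is needed: write $\vec{a} = \vec{b} + M\vec{z}$ with $\vec{z} \in \mathbb{Z}^{n}$. Then
\[
LM^{-1}(\vec{a} + \vec{f}) - LM^{-1}(\vec{b} + \vec{f}) = LM^{-1}M\vec{z} = L\vec{z},
\]
which is an integer vector since $L$ has integer entries. Therefore $LM^{-1}(\vec{a} + \vec{f})$ is integral if and only if $LM^{-1}(\vec{b} + \vec{f})$ is, and combining this with the previous paragraph gives the asserted equivalence $\vec{a} + \vec{f} \in R^{+}$ if and only if $\vec{b} + \vec{f} \in R^{+}$. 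I do not expect a real obstacle beyond this bookkeeping; the one thing to be careful about is to keep the lattice condition (ii) --- for which the hypothesis ``equivalent under $M$'' is tailor-made, and $L$ being an integer matrix is crucial --- cleanly separated from the positivity condition (i), for which it is the sub-unit size of the entries of $\vec{f}$ that does the work.
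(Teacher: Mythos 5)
Your proposal is correct and takes essentially the same route as the paper: both proofs reduce membership in $R^{+}$ to integrality of the image under $LM^{-1}$ and then observe that $LM^{-1}(\vec{a}+\vec{f}) - LM^{-1}(\vec{b}+\vec{f}) = L\vec{z} \in \mathbb{Z}^{n}$ when $\vec{a}-\vec{b} = M\vec{z}$, using that $L$ and $\vec{z}$ are integral. Your additional remark that the nonnegativity requirement transfers only because (in the situations where the lemma is applied) $\vec{a}$ and $\vec{b}$ are themselves nonnegative is a point the paper's proof silently assumes as well, so it is a clarification rather than a divergence.
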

\begin{proof}
    Recall that we have a nonnegative rational vector $\vec{v} \in R^{+}$ if and only if $LM^{-1}\vec{v}$ is a integer vector. If $\vec{a} \equiv_M \vec{b}$ then we may write $\vec{b} = \vec{a} + M\vec{z}$ where $z$ is an integer vector. This gives us $LM^{-1}\vec{b} = LM^{-1}\vec{a} + L\vec{z}$ and since $L$ and $\vec{z}$ are integral, we get the desired claim.
\end{proof}


\begin{example}
    Consider the signed graph from \cref{ex:usualbad}. We start by focusing on the chip-firing of the underlying graph (ignoring $L$ for now). From the configuration $\vec a = (1, 2, 0)$, we can fire vertex 2 to obtain the configuration $\vec b = (2, 0, 1)$, so these configurations are firing-equivalent (equivalent under $M$). Take some non-negative rational vectors where the entries are bounded above by $1$, say  $\vec f_1 = (\nicefrac 1 3, \nicefrac 2 3, 0)$ and $\vec f_2 = (\nicefrac 2 3, \nicefrac 1 3, 0)$. Looking at the images under $LM^{-1}$ we get the following table.

    \begin{center}
        \begin{tikzpicture}[scale=2]
            
        \end{tikzpicture}
    \end{center}
    
    \begin{center}
    \begin{tabular}{c | c}
        Preimage & Image under $LM^{-1}$ \\
        \hline
        $\vec a + \vec f_1 = (\nicefrac 4 3, \nicefrac 8 3, 0)$ & $(8, 7, 0)$ \\
        $\vec b + \vec f_1 = (\nicefrac 7 3, \nicefrac 2 3, 1)$ & $(7, 5, 1)$ \\
        \hline
        $\vec a + \vec f_2 = (\nicefrac 5 3, \nicefrac 7 3, 0)$ & $(8, \nicefrac{27}{4}, 0)$\\
        $\vec b + \vec f_2 = (\nicefrac 8 3, \nicefrac 1 3, 1)$ & $(7, \nicefrac{19}{4}, 1)$
    \end{tabular}
    \end{center}

    Notice from the table that we have $\vec{a} + \vec f_1$ and $\vec{b} + \vec f_1$ are both in $R^{+}$ whereas $\vec{a}+ \vec f_2$ and $\vec{b} + \vec f_2$ are both not in $R^{+}$, which is consistent with \cref{lem:equivsameblock}.
\end{example}

We now present our main result: the bijective map between superstable configurations and critical configurations of $(L,M)$-pairs that extends the classical duality map $\vec{c} \rightarrow \cm - \vec{c}$. Recall from \cref{rem:twobijects} that for $M$-matrices we have a bijection coming from equivalence classes and a bijection coming from the duality map. We still have the bijection induced by the equivalence classes \cref{thm:class}, but the classical duality map $\vec{c} \rightarrow \cm - \vec{c}$ does not work as seen in \cref{subsec:classicdualnotwork}.

Our idea is to develop a map that somewhat interpolates between the two, using the involution map $\ivo$ as a guide. In particular, when $L=M$ (the usual chip-firing on $M$-matrices), our map will be exactly the same as the classical duality map. Recall that $\sst(L,M)$ and $\crit(L,M)$ defined in \cref{subsec:lmpairstools} each represent the set of superstable preimages and critical preimages.

\begin{theorem}
\label{thm:mainbij}
    Let $(L,M)$ be any chip-firing pair. The map $\dual : \sst(L,M) \rightarrow \crit(L,M)$ given by $\ss \mapsto \cm-\ivo(\floor{\ss}) + \frp{\ss}$ is a bijection.
\end{theorem}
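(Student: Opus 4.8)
The plan is to sort both $\sst(L,M)$ and $\crit(L,M)$ by the floors of their elements and then reduce the statement to the duality on $M$ that is already available. Recall from \cref{thm:22floor} together with the bijection $ML^{-1}\colon S^{+}\to R^{+}$ of \cref{defn:valid} that $\sst(L,M)=\{\vec x\in R^{+}:\floor{\vec x}\in\sst(M)\}$ and $\crit(L,M)=\{\vec x\in R^{+}:\floor{\vec x}\in\crit(M)\}$, that any $\vec x\in R^{+}$ decomposes uniquely as $\vec x=\floor{\vec x}+\frp{\vec x}$ with $\frp{\vec x}$ having every coordinate in $[0,1)$, and that when $\vec a$ is a nonnegative integer vector and $\vec f\in[0,1)^{n}$ membership $\vec a+\vec f\in R^{+}$ is equivalent to $LM^{-1}(\vec a+\vec f)\in\mathbb{Z}^{n}$. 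My first step is to record that the ``floor-level map'' $\vec t\mapsto\cm-\ivo(\vec t)$ is a bijection $\sst(M)\to\crit(M)$: the preceding proposition shows $\ivo$ is an involution of $\sst(M)$, hence a bijection, and $\vec u\mapsto\cm-\vec u$ is the bijection $\sst(M)\to\crit(M)$ of \cref{thm:usualMduality}.

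The crux, and the step I expect to carry all the weight, is the congruence
\[
LM^{-1}\bigl(\cm-\ivo(\vec t)\bigr)-LM^{-1}\vec t\in\mathbb{Z}^{n}\qquad\text{for every }\vec t\in\sst(M),
\]
which I would prove by splitting into the two cases of \cref{def:ivomap}. If $\{LM^{-1}(2\vec t)\}=\{LM^{-1}\cm\}$, then $\ivo(\vec t)=\vec t$ and the left-hand side equals $LM^{-1}(\cm-2\vec t)$, which is integral precisely because that equality of fractional parts is the defining condition of this clause. Otherwise $\ivo(\vec t)=\sstab(\cm-\vec t)$, which by the definition of $\sstab(\,\cdot\,)$ lies in the same $M$-equivalence class as $\cm-\vec t$, so we may write $\ivo(\vec t)=(\cm-\vec t)+M\vec z$ with $\vec z\in\mathbb{Z}^{n}$; then $\cm-\ivo(\vec t)-\vec t=-M\vec z$ and the left-hand side equals $-L\vec z\in\mathbb{Z}^{n}$, exactly the kind of computation used to prove \cref{lem:equivsameblock}.

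Everything else is bookkeeping with floors and fractional parts. For $\ss\in\sst(L,M)$ write $\vec t=\floor{\ss}$ and $\vec f=\frp{\ss}$. Since $\ivo(\vec t)\in\sst(M)$, \cref{thm:usualMduality} gives $\cm-\ivo(\vec t)\in\crit(M)$, in particular a nonnegative integer vector, so $\dual(\ss)=(\cm-\ivo(\vec t))+\vec f$ is nonnegative with $\floor{\dual(\ss)}=\cm-\ivo(\vec t)\in\crit(M)$ and $\frp{\dual(\ss)}=\vec f$. Writing $LM^{-1}\vec f=LM^{-1}\ss-LM^{-1}\vec t$ and using $\ss\in R^{+}$ together with the congruence shows $LM^{-1}\dual(\ss)\in\mathbb{Z}^{n}$, hence $\dual(\ss)\in R^{+}$ and therefore $\dual(\ss)\in\crit(L,M)$ by \cref{thm:22floor}. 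The same congruence shows that for a fixed $\vec t\in\sst(M)$, with $\vec c=\cm-\ivo(\vec t)$, one has $\vec t+\vec f\in R^{+}\iff\vec c+\vec f\in R^{+}$ for all $\vec f\in[0,1)^{n}$; consequently $\dual$ restricts to a bijection, given simply by keeping the fractional part, from the fiber $\{\vec x\in\sst(L,M):\floor{\vec x}=\vec t\}$ onto the fiber $\{\vec x\in\crit(L,M):\floor{\vec x}=\cm-\ivo(\vec t)\}$ (both are empty or both nonempty, according to the same condition on $\vec f$). Since $\sst(L,M)$ is the disjoint union of these fibers over $\vec t\in\sst(M)$, $\crit(L,M)$ is the disjoint union over $\crit(M)$, and $\vec t\mapsto\cm-\ivo(\vec t)$ is a bijection between the two index sets, $\dual$ is a bijection. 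Equivalently, $\dual$ is injective because $\dual(\ss)$ recovers $\frp{\ss}$ and $\ivo(\floor{\ss})$, hence $\floor{\ss}$ since $\ivo$ is injective; and it is surjective because any $\vec c'\in\crit(L,M)$ with $\floor{\vec c'}=\vec c$, $\frp{\vec c'}=\vec g$ equals $\dual(\vec t+\vec g)$, where $\vec t$ is the unique element of $\sst(M)$ with $\cm-\ivo(\vec t)=\vec c$ and $\vec t+\vec g$ lies in $\sst(L,M)$ (with floor $\vec t$) by the congruence.

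The one genuine obstacle I anticipate is the two-case verification of the congruence: one has to see that the first clause of \cref{def:ivomap} is exactly the condition that makes the uncorrected assignment $\ss\mapsto\cm-\floor{\ss}+\frp{\ss}$ land in $R^{+}$, and that in the remaining case the built-in identity $\sstab(\cm-\vec t)\equiv_{M}\cm-\vec t$ is what lets the $L$-integrality argument behind \cref{lem:equivsameblock} go through; once the congruence is established, well-definedness and bijectivity are formal.
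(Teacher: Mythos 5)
Your proposal is correct and follows essentially the same route as the paper: the same two-case split according to \cref{def:ivomap}, with the fixed-point case handled by the fractional-part condition $\{LM^{-1}(2\floor{\ss})\}=\{LM^{-1}\cm\}$ and the other case by the $M$-equivalence $\sstab(\cm-\floor{\ss})\equiv_M\cm-\floor{\ss}$ fed into \cref{lem:equivsameblock}, then \cref{thm:22floor} to conclude criticality. You go somewhat further than the paper's written proof by spelling out the fiber-by-fiber bijectivity (the paper only verifies well-definedness and records the inverse as a corollary), which is a welcome addition rather than a deviation.
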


\begin{proof}
     We approach this proof in two cases. When $\ivo(\floor{\vec s})=\sstab(\cm-\floor{\vec s})$, we have
     \begin{align*}
         \cm - \ivo(\floor{\vec s}) &= \cm - \sstab(\cm - \floor{\vec s}).\\
         \intertext{Since $\cm - \vec s$ is critical for any superstable configuration $\vec s$ of $M$,}
         &= \crit(\cm - (\cm - \floor{\vec s}))\\
         &= \crit(\floor{\vec s}).
     \end{align*}
     
     From \cref{lem:equivsameblock}, since $\vec s = \floor{\vec s} + \{\vec s\} \in R^+$, we have that $\crit(\floor{\vec s}) + \{\vec s\}$ is in $R^+$, and is a critical preimage of $(L,M)$ thanks to \cref{thm:22floor}.
     
     In the second case when $\ivo(\floor{\vec s})=\floor{\vec s}$,
     recall that
     \begin{align*}
     \ivo(\floor{\vec s})=\floor{\vec s} 
        &\iff \{\LM(2\floor{\vec s})\} = \{\LM(\cm)\} 
        \\&\iff \{\LM(\floor{\vec s})\} = \{\LM(\cm - \floor{\vec s})\}.
     \end{align*}
    Together with the fact that $LM^{-1}(\floor{\vec s} + \{\vec s\})$ is integral, the above tells us that $LM^{-1}(\cm-\floor{\vec s} + \{\vec s\})$ is also integral. Hence $\cm-\floor{\vec s} + \{\vec s\} \in R^{+}$ and is a critical preimage of $(L,M)$ thanks to \cref{thm:22floor}.

Now we show that the map is injective. Assume that we have $\cm-\ivo(\floor{\ss}) + \frp{\ss} = \cm-\ivo(\floor{\ss'}) + \frp{\ss'}$. Since $\ivo(\floor{\ss}), \ivo(\floor{\ss'})$ are integral and each entry in $\frp{\ss},\frp{\ss'}$ is from $[0,1)$, we get $\ivo(\floor{\ss}) = \ivo(\floor{\ss'})$ and $\frp{\ss} = \frp{\ss'}$. Since $\ivo$ is an involution from \cref{prop:ivoinvol}, we get $\ss = \ss'$.

Finally, from \cref{thm:class}, we see that the map is a bijection.

\end{proof}

In Question~5.2 of \cite{cho2024}, it was asked if we can establish a duality in the signed graph case. As described in \cref{ex:signed_kyle}, using $L$ as the Laplacian of a signed graph and $M$ as the Laplacian of the underlying unsigned graph, \cref{thm:mainbij} answers this question. 

\begin{example}
        Again take a look at the signed graph from \cref{ex:usualbad}.
        \begin{center}
           \begin{tikzpicture}[scale=2]
               
           \end{tikzpicture}
        \end{center}
        Start from the superstable configuration $(5,4,0)$ in $S^{+}$. As can be seen in the table in \cref{ex:usualbad}, its corresponding preimage is $\vec{s} = (\nicefrac{4}{3}, \nicefrac{7}{6},0)$.
        
        We can check that $\{LM^{-1}2\floor{\vec s}\} = (0, \nicefrac{1}{2}, 0) \neq (0,0,0) = \{LM^{-1}\cmax\}$, so $\ivo (\floor{\vec s}) = \sstab(\cmax - \floor{\vec s})$. For this graph, we have $\cmax = (2,1,2)$, so $\ivo (\floor{\vec s}) = (0,0,1)$ as $M^{-1}(\cmax - \floor{\vec s} - (0,0,1)) \in \ZZ$.

        Then \[\dual(\vec s) = \cmax - \ivo(\floor{\vec s}) + \{\vec s\} = (2,1,2) - (0,0,1) + \paren{\nicefrac{1}{3}, \nicefrac{1}{6}, 0} = \paren{\nicefrac{7}{3}, \nicefrac{7}{6}, 1}\]
        gives a critical preimage in $R^+$, which corresponds to the critical configuration $(8,6,1)$ of $S^+$. The table of superstables and criticals in \cref{ex:usualbad} is aligned in a way so that the superstable configuration and the critical configuration obtained from this duality map are in the same row.
    \end{example}

When we are looking at chip-firing pairs $(M,M)$ the above duality map is exactly same as the previously known duality map for $M$-matrices: sending $\vec{c}$ to $c_{max} - \vec{c}$.

\begin{example}
    Consider the graph from \cref{ex:unsignedkyle}.
    \begin{center}
       \begin{tikzpicture}[scale=2]
           
       \end{tikzpicture}
    \end{center}
    We have that $(0,0,1)$ and $(1,1,0)$ are superstable configurations (and also superstable preimages, since $LM^{-1}$ is the identity matrix). Since they are integral, they are fixed points in our involution $\ivo$.

    Then
    \[\dual(0,0,1) = \cm-\ivo(\floor{(0,0,1)}) + \frp{(0,0,1)} = (2,1,2) - (0,0,1) + (0,0,0) = (2,1,1),\]
    \[\dual(1,1,0) = \cm-\ivo(\floor{(1,1,0)}) + \frp{(1,1,0)} = (2,1,2) - (1,1,0) + (0,0,0) = (1,0,2),\]
    We can see that this aligns with the classical duality map between superstable and critical configurations for graphs.
\end{example}

The inverse of the above duality map looks like the following.

\begin{corollary}
    Let $(L,M)$ be any chip-firing pair. The map $\dual^{-1} : \crit(L,M) \rightarrow \sst(L,M)$ given by $\vec{c} \mapsto \ivo(\cm - \floor{\vec{c}}) + \frp{\vec{c}}$ is a bijection.
\end{corollary}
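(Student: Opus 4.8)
The plan is to verify directly that the map $g\colon \crit(L,M)\to\sst(L,M)$ given by $g(\vec c)=\ivo(\cm-\floor{\vec c})+\frp{\vec c}$ is a two-sided inverse of $\dual$. Since the preceding theorem already establishes that $\dual$ is a bijection, once we know $g\circ\dual=\mathrm{id}$ and $\dual\circ g=\mathrm{id}$ it follows formally that $g=\dual^{-1}$, and in particular that $g$ is a bijection with exactly the asserted domain and codomain. (Either one of the two compositions would already suffice together with bijectivity of $\dual$, but computing both also re-derives that $\dual$ is a bijection and is essentially the same work.)

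The one place that needs genuine care is the bookkeeping of integer and fractional parts; everything else is formal. First I would record that $\cm$ dominates every superstable configuration of $M$ coordinate-wise: for $\vec t\in\sst(M)$, the $M$-matrix duality \cref{thm:usualMduality} makes $\cm-\vec t$ critical, hence a nonnegative integer vector. Consequently, for $\vec s\in\sst(L,M)$ we have $\floor{\vec s}\in\sst(M)$ by \cref{thm:22floor}, so $\ivo(\floor{\vec s})\in\sst(M)$, and therefore $\cm-\ivo(\floor{\vec s})$ is a nonnegative integer vector; since $\frp{\vec s}\in[0,1)^n$ this gives $\floor{\dual(\vec s)}=\cm-\ivo(\floor{\vec s})$ and $\frp{\dual(\vec s)}=\frp{\vec s}$. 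Symmetrically, for $\vec c\in\crit(L,M)$ the configuration $LM^{-1}\vec c\in S^+$ is critical, and since $ML^{-1}\cdot LM^{-1}=I$, \cref{thm:22floor} gives $\floor{\vec c}\in\crit(M)$; hence $\cm-\floor{\vec c}\in\sst(M)$, which is a legitimate input for $\ivo$, and $\ivo(\cm-\floor{\vec c})$ is again a nonnegative integer vector, so $g(\vec c)$ has integer part $\ivo(\cm-\floor{\vec c})$ and fractional part $\frp{\vec c}$.

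With this in hand, both compositions collapse to one-line identities using only that $\ivo$ is an involution:
\begin{align*}
g(\dual(\vec s)) &= \ivo\bigl(\cm-(\cm-\ivo(\floor{\vec s}))\bigr)+\frp{\vec s}
= \ivo(\ivo(\floor{\vec s}))+\frp{\vec s} = \floor{\vec s}+\frp{\vec s} = \vec s,\\
\dual(g(\vec c)) &= \cm-\ivo\bigl(\ivo(\cm-\floor{\vec c})\bigr)+\frp{\vec c}
= \cm-(\cm-\floor{\vec c})+\frp{\vec c} = \floor{\vec c}+\frp{\vec c} = \vec c.
\end{align*}
Thus $g=\dual^{-1}$. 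The only real obstacle is the first-step bookkeeping — making sure that $\dual$ and $g$ never transfer mass between the integral and fractional coordinates — and that is precisely where the domination $\cm\ge\vec t$ for superstable $\vec t$ of $M$ enters; it is already supplied by \cref{thm:usualMduality}, so no new ingredient is required.
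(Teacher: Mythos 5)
Your proof is correct. The paper states this corollary without any proof (it is presented as an immediate consequence of the theorem), so there is no argument to compare against; your two-sided-inverse verification is the natural way to fill that gap. The one genuinely non-formal step is exactly the one you isolate: that $\cm$ dominates every superstable configuration of $M$ coordinate-wise (via \cref{thm:usualMduality}), so that $\cm-\ivo(\floor{\vec s})$ and $\ivo(\cm-\floor{\vec c})$ are nonnegative integer vectors and hence the maps $\dual$ and $g$ never mix the integral and fractional parts, i.e.\ $\floor{\dual(\vec s)}=\cm-\ivo(\floor{\vec s})$ and $\frp{\dual(\vec s)}=\frp{\vec s}$. Once that is in place, both compositions reduce to the involution property of $\ivo$, and the fact that $g$ lands in $\sst(L,M)$ follows from $g\circ\dual=\mathrm{id}$ together with surjectivity of $\dual$, as you note.
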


    

\section{Frackets}



    In this section, we focus on \newword{frackets}, subgroups of the critical group constructed by looking at the fractional parts of preimages. We provide a formula for calculating the cardinality of such groups and then use that result to enumerate the fixed points of the involution map studied in the previous section. 

    \subsection{The definition of frackets}
    For any integral, invertible matrix $L$, we let $\critgroup(L)$ to denote the group we get by looking at the equivalence classes of configurations given by $\equiv_L$. Let $L$ and $M$ be any integral, invertible $n$-by-$n$ matrices. We call such pair $(L,M)$ as an \newword{ii-pair}. 
    
    Furthermore, when a vector $\vec{f}$ satisfies $0 \leq f_i < 1$ for every coordinate $i$, we call it \newword{fractional vector}. Recall that for any vector $\vec{v}$, we use $\{\vec v\}$ to denote its fractional part. Combining these concepts, we can obtain the following result.
    
    \begin{definition}
        Given any ii-pair $(L,M)$ and any fractional vector $\vec f$, we define the $L$-fracket $F^L_{\vec f}$ as the subset of $\mathcal K(L)$ consisting of every equivalence class that has a vector representation $\vv \in \ZZ^n$ such that $\{ML^{-1}\vv\} = \vec{f}$.
    \end{definition}

    In other words, an $L$-fracket consists of all equivalence classes of configurations whose images under $ML^{-1}$ have the same fractional part, and the collection of $L$-frackets represent all unique fractional parts of configurations under $\ML$. It is clear from the definition that for any chip-firing pair $(L,M)$ it is also an ii-pair. Moreover, since we are only considering integral $M$-matrices, $(M,L)$ is also an ii-pair as well.
    
    \begin{example}
    Let us again consider the chip-firing pair $(L,M)$ studied in \cref{table:signed_toy_ss}.

    \begin{center}
        \begin{tikzpicture}[scale=2]
        
        \end{tikzpicture}
    
    \begin{tabular}{c | c}
        Element of $\critgroup(L)$ & Preimage in $R^{+}$ \\
        \hline
        $(7, 6, 8)$ & $(\nicefrac 1 2, 0, \nicefrac 5 2)$ \\
        $(8, 6, 7)$ & $(\nicefrac 5 2, 0, \nicefrac 1 2)$
    \end{tabular}
    \end{center}

    Their preimages both have fractional part $(\nicefrac 1 2, 0, \nicefrac 1 2)$, so we have $[(7,6,8)]_L, [(8,6,7)]_L \in  F^L_{(\nicefrac 1 2, 0, \nicefrac 1 2)}$.
    

    \begin{center}
    \begin{tabular}{c | c}
        Element of $\critgroup(M)$ & Image under $LM^{-1}$ \\
        \hline
        $(0, 0, 0)$ & $(0, 0, 0)$ \\
        $(0, 1, 0)$ & $(3, 3, 3)$
    \end{tabular}
    \end{center}
    
    The image under $LM^{-1}$ of these configurations both have fractional part $\vec 0$, so we have $[(0,0,0)]_M,[(0,1,0)]_M \in F^M_0$. 
    \end{example}



    We will call $F^L_0$ the \newword{zero fracket} from now on. Let us now establish several basic properties on frackets. In general, $F^L_{\vec{f}}$ is not necessarily a group, but the zero fracket is closed under addition:
    
    \begin{lemma}
    \label{lem:zerofrackgroup}
        The zero fracket $F^L_0$ is a subgroup of $\mathcal{K} (L)$. 
    \end{lemma}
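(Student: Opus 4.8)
The plan is to realize $F^L_0$ as the homomorphic image of a subgroup of $\ZZ^n$ and then invoke the standard fact that images of subgroups under group homomorphisms are subgroups. Concretely, let $q\colon \ZZ^n \to \ZZ^n/\Img L = \mathcal{K}(L)$ be the canonical quotient map, and set
\[
H = \{\vec v \in \ZZ^n : ML^{-1}\vec v \in \ZZ^n\}.
\]
First I would check that $H$ is a subgroup of $\ZZ^n$: it is precisely the kernel of the group homomorphism $\ZZ^n \to \mathbb{Q}^n/\ZZ^n$ sending $\vec v \mapsto ML^{-1}\vec v + \ZZ^n$, since $ML^{-1}$ is linear. (Alternatively, one checks directly that $H$ contains $\vec 0$ and is closed under addition and negation, which is immediate.)

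Next I would note that $\Img L \subseteq H$. Indeed, a generator of $\Img L$ has the form $L\vec e_i$, and $ML^{-1}(L\vec e_i) = M\vec e_i \in \ZZ^n$ because $M$ is an integer matrix. Consequently $q(H) = (H + \Img L)/\Img L = H/\Img L$ is a well-defined subgroup of $\mathcal{K}(L)$. Finally I would identify $F^L_0$ with $q(H)$: unwinding the definition, a class $[\vec v]_L$ lies in $F^L_0$ exactly when it has some representative $\vec v' \in \ZZ^n$ with $\{ML^{-1}\vec v'\} = \vec 0$, i.e.\ $ML^{-1}\vec v' \in \ZZ^n$, i.e.\ $\vec v' \in H$; this says precisely $[\vec v]_L \in q(H)$. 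Hence $F^L_0 = H/\Img L$ is a subgroup.

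There is no real obstacle here; the statement is essentially a bookkeeping exercise. The one point deserving a moment of care — and the closest thing to a subtlety — is that membership of a class in $F^L_0$ is witnessed by \emph{some} integer representative lying in $H$ rather than by all of them, so one must make sure this matches $q(H) = (H + \Img L)/\Img L$ rather than something smaller; this is exactly where the inclusion $\Img L \subseteq H$ is used, and it causes no difficulty. (If one prefers to avoid the quotient language entirely, the same argument can be phrased as a direct verification that $F^L_0$ is nonempty, closed under the group operation, and closed under inverses, using linearity of $ML^{-1}$ at each step, together with the observation that firing-equivalent vectors differ by an element of $\Img L \subseteq H$.)
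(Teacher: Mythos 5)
Your argument is correct and complete: realizing $F^L_0$ as the image under the quotient map of the subgroup $H=\{\vec v\in\mathbb{Z}^n: ML^{-1}\vec v\in\mathbb{Z}^n\}$ (the kernel of $\vec v\mapsto ML^{-1}\vec v+\mathbb{Z}^n$), and checking $\Img L\subseteq H$ so that the ``some representative'' clause in the definition causes no trouble, is exactly the natural route. The paper omits the proof of this lemma entirely (the point you flag about representatives is essentially its later Lemma on firing-equivalent vectors lying in the same fracket), so there is nothing to compare against; your write-up would serve as the missing proof.
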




    Since all cosets have the same size, we have the following result.

    \begin{lemma}
    \label{lem:fracketsizesame}
    For any fractional vectors $\vec{f}$ and $\vec{g}$ such that $F^L_{\vec{f}}$ and $F^L_{\vec{g}}$ are both nonempty, we have $|F^L_{\vec{f}}| = |F^L_{\vec{g}}|$.
    \end{lemma}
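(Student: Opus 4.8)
The plan is to identify every non-empty fracket with a coset of the zero fracket $F^L_0$ inside $\mathcal{K}(L)$, and then conclude from the fact that all cosets of a finite subgroup have the same cardinality, using \cref{lem:zerofrackgroup}.

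First I would record a well-definedness observation. If $\vec v \equiv_L \vec v'$, write $\vec v' = \vec v + L\vec z$ with $\vec z \in \ZZ^n$; then $ML^{-1}\vec v' = ML^{-1}\vec v + M\vec z$, and since $M$ is an integer matrix and $\vec z$ is integral, $M\vec z \in \ZZ^n$, so $\{ML^{-1}\vec v'\} = \{ML^{-1}\vec v\}$. Hence $[\vec v]_L \mapsto \{ML^{-1}\vec v\}$ is a well-defined map $\phi\colon \mathcal{K}(L) \to (\mathbb{R}/\ZZ)^n$, and membership $[\vec v]_L \in F^L_{\vec f}$ is equivalent to $\phi([\vec v]_L) = \vec f$; that is, $F^L_{\vec f} = \phi^{-1}(\vec f)$. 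Because addition in $\mathbb{R}/\ZZ$ is exactly ``add and reduce modulo $\ZZ$'', $\phi$ is a group homomorphism, and its kernel is precisely the set of classes whose image under $ML^{-1}$ is integral, i.e.\ $\ker\phi = F^L_0$, which is a subgroup by \cref{lem:zerofrackgroup}.

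Now I would finish as follows. The matrix $L$ is invertible and integral, so $\mathcal{K}(L) = \ZZ^n/\Img L$ is a finite group; thus $F^L_0$ is a finite subgroup, and every non-empty fiber $\phi^{-1}(\vec f) = F^L_{\vec f}$ is a coset of $F^L_0$ and therefore has cardinality exactly $|F^L_0|$. If $F^L_{\vec f}$ and $F^L_{\vec g}$ are both non-empty, we get $|F^L_{\vec f}| = |F^L_0| = |F^L_{\vec g}|$, as claimed. Equivalently, and without the group-homomorphism language: fix a representative $[\vec v]_L \in F^L_{\vec f}$; then for $\vec u \in \ZZ^n$ one has $[\vec u]_L \in F^L_{\vec f}$ iff $ML^{-1}(\vec u - \vec v)$ is integral iff $[\vec u - \vec v]_L \in F^L_0$, so $F^L_{\vec f} = [\vec v]_L + F^L_0$.

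There is no real obstacle here: the entire content is the bookkeeping that $\{ML^{-1}\vec v\}$ depends only on the $L$-class of $\vec v$ --- whose key input is the integrality of $M$ --- together with the elementary coset count. The only point to state carefully is that the ``has a vector representation'' clause in the definition of $F^L_{\vec f}$ is, by this observation, the same as ``every vector representation'', so that the fracket is genuinely a union of fibers of $\phi$.
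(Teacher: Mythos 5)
Your proposal is correct and matches the paper's approach: the paper derives this lemma in one line by observing that the non-empty frackets are exactly the cosets of $F^L_0$ in $\mathcal{K}(L)$, which is precisely the coset/homomorphism-fiber argument you spell out. The well-definedness step you include (that $\{ML^{-1}\vec v\}$ depends only on $[\vec v]_L$) is the content of \cref{lem:equivalencefrackets} in the paper.
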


    We can go between the elements of $F^L_0$ and $F^M_0$ by the invertible linear transformations $ML^{-1}$ and $LM^{-1}$. This map preserves vector addition and integrality, so we get the following:
    
    \begin{proposition}\label{prop:frack_equals_s_frack}
        For an arbitrary ii-pair $(L,M)$, we have $F^L_0 \cong F^M_0$.
    \end{proposition}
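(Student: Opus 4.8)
The plan is to exhibit an explicit group isomorphism using the linear maps $ML^{-1}$ and $LM^{-1}$, which are mutually inverse bijections on $\mathbb{Q}^n$. First I would define the candidate map $\Phi : F^L_0 \to F^M_0$ as follows: given a class $[\vv]_L \in F^L_0$, choose the representative $\vv \in \ZZ^n$ with $\{ML^{-1}\vv\} = \vec 0$ (such a representative exists by membership in $F^L_0$, and in fact $ML^{-1}\vv \in \ZZ^n$); then set $\Phi([\vv]_L) := [ML^{-1}\vv]_M$. Symmetrically, $LM^{-1}$ should give the inverse. The key points to verify, in order, are: (i) well-definedness — the value does not depend on the choice of integral representative with integral image; (ii) the image genuinely lands in $F^M_0$; (iii) $\Phi$ is a group homomorphism; (iv) the map induced by $LM^{-1}$ is a two-sided inverse, giving bijectivity.

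For well-definedness, suppose $\vv, \vv' \in \ZZ^n$ are two representatives of the same $L$-class with $ML^{-1}\vv, ML^{-1}\vv' \in \ZZ^n$. Then $\vv - \vv' \in \Img L$, say $\vv - \vv' = L\vec z$ with $\vec z \in \ZZ^n$, so $ML^{-1}(\vv - \vv') = M\vec z \in \Img M$, hence $[ML^{-1}\vv]_M = [ML^{-1}\vv']_M$. This also shows the homomorphism property immediately: if $\vv_1, \vv_2$ are integral representatives of two classes in $F^L_0$ with integral images under $ML^{-1}$, then $\vv_1 + \vv_2$ is an integral representative of the sum with $ML^{-1}(\vv_1+\vv_2) = ML^{-1}\vv_1 + ML^{-1}\vv_2 \in \ZZ^n$, so $\Phi([\vv_1]_L + [\vv_2]_L) = [ML^{-1}\vv_1 + ML^{-1}\vv_2]_M = \Phi([\vv_1]_L) + \Phi([\vv_2]_L)$. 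For (ii): we need $[ML^{-1}\vv]_M$ to lie in $F^M_0$, i.e. to have an integral representative whose image under $LM^{-1}$ has zero fractional part; but $ML^{-1}\vv$ itself is integral and $LM^{-1}(ML^{-1}\vv) = L M^{-1} M L^{-1} \vv = \vv \in \ZZ^n$, so indeed $[ML^{-1}\vv]_M \in F^M_0$. Running the same construction with $L$ and $M$ swapped produces $\Psi : F^M_0 \to F^L_0$, and $\Psi(\Phi([\vv]_L)) = [LM^{-1}ML^{-1}\vv]_L = [\vv]_L$ on representatives, with the other composite handled symmetrically; hence $\Phi$ is a bijective homomorphism.

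The main obstacle is really a bookkeeping one: making sure that at each stage one works with the \emph{integral} representative of a fracket class whose image under the relevant matrix is also integral, rather than an arbitrary representative — an arbitrary integral representative of a class in $F^L_0$ need not have integral image under $ML^{-1}$, only a $\Img L$-translate of it does. Once one fixes the convention of always passing to such a representative (which exists precisely because the class lies in the zero fracket), every computation is a one-line application of $ML^{-1}$ and $LM^{-1}$ being inverse integral-preserving linear maps, and the additive and inverse structure transfers with no friction. I would also remark that $F^L_0$ is a group by \cref{lem:zerofrackgroup}, so the statement $F^L_0 \cong F^M_0$ is meaningful as an isomorphism of groups rather than merely a bijection of sets.
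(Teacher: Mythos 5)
Your proof is correct and is exactly the argument the paper intends: the paper's justification is a one-line remark that $ML^{-1}$ and $LM^{-1}$ carry $F^L_0$ and $F^M_0$ to one another while preserving vector addition and integrality, and you have simply written out the well-definedness, containment, homomorphism, and two-sided-inverse checks in full. One small inaccuracy in your closing remarks: by \cref{lem:equivalencefrackets}, \emph{every} integral representative of a class in $F^L_0$ has integral image under $ML^{-1}$ (the fractional part of $ML^{-1}\vec v$ is constant on $L$-equivalence classes), so your caution that "an arbitrary integral representative need not have integral image" is false --- though harmless, since you verify well-definedness directly anyway.
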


    Recall that the cardinality of the critical group $\critgroup(L)$ is given by the determinant of $L$, denoted by $|L|$. Since $F^L_0$ is a subgroup of $\mathcal{K}(L)$ and $F^M_0$ is a subgroup of $\mathcal{K}(M)$ and from \cref{prop:frack_equals_s_frack}, we immediately see that $|F^L_0|$ divides $\gcd(|L|, |M|)$. Moreover, from \cref{lem:fracketsizesame} and \cref{prop:frack_equals_s_frack} we get the following result.

    \begin{corollary}
    \label{cor:fracketallsamesize}
    For any ii-pair $(L,M)$, all $L$-frackets and all $M$-frackets have the same size.
   \end{corollary}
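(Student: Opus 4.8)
The plan is to deduce this directly from Lemma~\ref{lem:fracketsizesame} and Proposition~\ref{prop:frack_equals_s_frack}, the only additional ingredient being the observation that the zero frackets are never empty. First I would note that $F^L_0$ always contains the equivalence class of $\vec 0$: the representative $\vv = \vec 0$ gives $ML^{-1}\vv = \vec 0 \in \ZZ^n$, so $\{ML^{-1}\vv\} = \vec 0$ and hence $[\vec 0]_L \in F^L_0$. (This is also implicit in Lemma~\ref{lem:zerofrackgroup}, a group being nonempty.) The identical argument with the ii-pair $(M,L)$ shows $F^M_0$ is nonempty.

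Next I would apply Lemma~\ref{lem:fracketsizesame} twice. Since $F^L_0$ is a nonempty $L$-fracket, every nonempty $L$-fracket $F^L_{\vec f}$ satisfies $|F^L_{\vec f}| = |F^L_0|$; likewise every nonempty $M$-fracket has cardinality $|F^M_0|$. Thus the assertion ``all $L$-frackets have the same size'' collapses to the single number $|F^L_0|$, and ``all $M$-frackets have the same size'' collapses to $|F^M_0|$.

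Finally, Proposition~\ref{prop:frack_equals_s_frack} gives $F^L_0 \cong F^M_0$ as groups, hence $|F^L_0| = |F^M_0|$. Chaining the three equalities, every nonempty $L$-fracket and every nonempty $M$-fracket shares the common cardinality $|F^L_0| = |F^M_0|$, which is the claim. I expect no genuine obstacle here: the statement is a formal consequence of results already established, with essentially all the substance pushed into the proof of Proposition~\ref{prop:frack_equals_s_frack} (the $ML^{-1}$/$LM^{-1}$ correspondence between $F^L_0$ and $F^M_0$) and into Lemma~\ref{lem:fracketsizesame}. The one point worth stating carefully is the convention that ``all frackets'' means ``all \emph{nonempty} frackets,'' since a fractional vector $\vec f$ need not occur as $\{ML^{-1}\vv\}$ for any integral $\vv$; with that understanding the argument above is complete.
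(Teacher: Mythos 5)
Your proposal is correct and follows exactly the route the paper takes: the corollary is stated as an immediate consequence of Lemma~\ref{lem:fracketsizesame} and Proposition~\ref{prop:frack_equals_s_frack}, with the only extra detail you supply being the (correct and worth noting) observation that the zero frackets are nonempty, which anchors the common cardinality at $|F^L_0| = |F^M_0|$.
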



    \begin{lemma}
    \label{lem:equivalencefrackets}
        Suppose $\vv, \vec u$ are integral vectors with $\vv \equiv_L \vec u$. Then, $\vv$ and $\vec u$ belong to the same $L$-fracket.
    \end{lemma}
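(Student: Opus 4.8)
The plan is to unwind the definitions and reduce the claim to the observation that left-multiplying by the integral matrix $M$ followed by the integral matrix $L^{-1}$ — wait, we want $ML^{-1}$ — sends $L$-equivalent vectors to vectors with the same fractional part. Concretely, suppose $\vv \equiv_L \vec u$, so we may write $\vec u = \vv + L\vec z$ for some integer vector $\vec z$. Applying $ML^{-1}$ gives $ML^{-1}\vec u = ML^{-1}\vv + ML^{-1}L\vec z = ML^{-1}\vv + M\vec z$. Since $M$ and $\vec z$ are both integral, $M\vec z \in \ZZ^n$, so $ML^{-1}\vec u$ and $ML^{-1}\vv$ differ by an integer vector, hence $\{ML^{-1}\vec u\} = \{ML^{-1}\vv\}$.

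From here I would finish by invoking the definition of the $L$-fracket directly. Set $\vec f = \{ML^{-1}\vv\}$; then $\vv$ witnesses that $[\vv]_L \in F^L_{\vec f}$, and by the computation above $\{ML^{-1}\vec u\} = \vec f$ as well, so $\vec u$ witnesses that $[\vec u]_L \in F^L_{\vec f}$. Thus both $\vv$ and $\vec u$ lie in the same $L$-fracket $F^L_{\vec f}$, which is exactly the assertion. (One could equivalently phrase this as: the fracket containing an integral vector depends only on its $\equiv_L$-class, so $F^L_{\vec f}$ is a well-defined union of equivalence classes — but the statement as given only asks for the single conclusion above.)

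This argument is essentially the same two-line manipulation that appears in the proof of \cref{lem:equivsameblock}, just with the roles of $L$ and $M$ interchanged; indeed \cref{lem:equivsameblock} handled $\equiv_M$-equivalence and membership in $R^+$, whereas here we handle $\equiv_L$-equivalence and membership in a fixed fracket. There is no real obstacle: the only thing to be careful about is keeping track of which matrix is integral and which inverse is being used, i.e. that $ML^{-1}L = M$ is integral even though $ML^{-1}$ and $L^{-1}$ individually need not be. Everything else is a direct appeal to the definition of $F^L_{\vec f}$.
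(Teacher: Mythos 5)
Your argument is correct and is essentially identical to the paper's own proof: both write $\vec u - \vv = L\vec z$, apply $ML^{-1}$ to get a difference of $M\vec z \in \ZZ^n$, and conclude the fractional parts agree so the two vectors lie in the same fracket. No issues.
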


    \begin{proof}
    
        From $\vv \equiv_L \vec u$, we get $\vv = \vec u + L \vec z$ for some integer vector $\vec z$. Then, $ML^{-1}\vv - ML^{-1}\vec u = M \vec z$. Since $M$ is integral, $M \vec z$ is integral, so $ML^{-1} \vv - ML^{-1} \vec u$ is integral. Therefore, we get $\{ML^{-1} \vv\} = \{ML^{-1} \vec u\}$, so $\vv$ and $\vec u$ belong to the same $L$-fracket.
    \end{proof}

    Since $F^L_0$ is a subgroup of the group $\mathcal{K}(L)$, we can consider the quotient group $\mathcal{K}(L) / F^L_0$. The elements of this group are the images of the fractional vectors indexing the $L$-frackets.

    \begin{example}
    \label{ex:Lfracketquotient}
    Consider the $ii$-pair coming from the $(L,M)$ pair studied in \cref{ex:usualbad}. We can look at the 2nd column (or the 5th column) of \cref{table:toy_stats} to extract the frackets. There is an $L$-fracket corresponding to each of the following fractional vectors: 
    \[(0, 0, 0), (\nicefrac 1 3, \nicefrac 1 6, 0), (\nicefrac 2 3, \nicefrac 1 3, 0), (0, \nicefrac 1 2, 0) (\nicefrac 1 3, \nicefrac 2 3, 0), (\nicefrac 2 3, \nicefrac 5 6, 0).\]
    Beware that the image of these fractional vectors under the map $LM^{-1}$ is not necessarily an integral vector. However, there exists a vector in each of the corresponding frackets. By taking a vector from each fracket, we get the elements of $\mathcal{K}(L) / F^L_0$:
    $$(0,0,0),(3,2,0),(4,3,2),(1,1,0),(4,3,0),(5,4,2).$$
    \end{example}
    

    \subsection{Computing the size of the frackets}

    Recall that from \cref{cor:fracketallsamesize}, for any ii-pair $(L,M)$, all $L$-frackets and all $M$-frackets have the same size. So we only need to be able to compute the size of $F^L_0$ in order to obtain the size of all the frackets. In this subsection, we show a formula for computing $|F^L_0|$. 


    
    We begin with a lemma that describes the largest invariant factor of each of $\mathcal{K}(L) / F^L_0$ and $\mathcal{K}(M) / F^M_0$. Given a matrix $M$ with rational entries, we use $\flcm(M)$ to denote the least common multiple of all the denominators of the entries (written in irreducible fractions). Given a matrix $M$ with integer entries, we use $\gcd(M)$ to denote the greatest common divisor of all entries. Given matrices $L,M$ with integer entries, we use $\gcd(L,M)$ to denote the greatest common divisor of all entries of $L$ and $M$.
    
    \begin{lemma}\label{lem:frackets_largest_inv_factors}
    The largest invariant factors for $\mathcal{K}(M) / F^M_0$ and $\mathcal{K}(L) / F^L_0$ are $\flcm(\LM)$ and  $\flcm(\ML)$, respectively.
\end{lemma}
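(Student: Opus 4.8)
The plan is to compute the largest invariant factor of $\mathcal{K}(L)/F^L_0$ directly from the structure of the quotient, and to do the same for $\mathcal{K}(M)/F^M_0$ by the symmetric argument; by \cref{prop:frack_equals_s_frack} the two quotients are in bijection via the maps $ML^{-1}$ and $LM^{-1}$ anyway, but the two largest invariant factors genuinely differ (one is $\flcm(\ML)$, the other $\flcm(\LM)$), so the role of $L$ and $M$ must be tracked carefully. The key observation is that the elements of $\mathcal{K}(L)/F^L_0$ are exactly indexed by the fractional vectors $\vec f$ for which $F^L_{\vec f}$ is non-empty, and by \cref{lem:equivalencefrackets} a class $[\vv]_L$ lies in $F^L_0$ precisely when $\{ML^{-1}\vv\}=\vec 0$, i.e. when $ML^{-1}\vv\in\ZZ^n$. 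So $\mathcal{K}(L)/F^L_0$ is naturally identified with the image of the homomorphism $\varphi\colon \ZZ^n \to (\mathbb{Q}/\mathbb{Z})^n$ sending $\vv \mapsto \{ML^{-1}\vv\}$; equivalently, with $ML^{-1}\ZZ^n / \ZZ^n$ (intersected appropriately inside $(\ML)\ZZ^n + \ZZ^n$).

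First I would make this identification precise: $\mathcal{K}(L)/F^L_0 \cong \big((\ML)\ZZ^n + \ZZ^n\big)/\ZZ^n$, a finite subgroup of $(\mathbb{Q}/\mathbb{Z})^n$. Then I would invoke the structure theorem / Smith normal form for the rational matrix $\ML$: write $\ML = U D V$ with $U,V \in GL_n(\ZZ)$ and $D$ diagonal with entries $d_i = p_i/q_i$ in lowest terms. Unimodular changes of basis on the source and target do not change the lattice $(\ML)\ZZ^n + \ZZ^n$ up to isomorphism of the quotient (left multiplication by $U$ is an automorphism of $\ZZ^n$ in the target; right multiplication by $V$ reindexes the source), so the quotient becomes $\bigoplus_i \big(\tfrac{1}{q_i}\ZZ/\ZZ\big) \cong \bigoplus_i \ZZ/q_i\ZZ$. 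The largest invariant factor of this group is $\operatorname{lcm}(q_1,\dots,q_n)$, which is exactly $\flcm(\ML)$ since the $q_i$ are, up to the unimodular transformations, the denominators appearing in $\ML$ — and one checks that $\operatorname{lcm}$ of the Smith-form denominators equals the lcm of all entry-denominators of the original matrix (Smith normal form over $\ZZ$ of $q\cdot \ML$ for $q=\flcm(\ML)$ has all its entries divisible only up to the needed factors, so no denominator is lost or spuriously introduced). The statement for $\mathcal{K}(M)/F^M_0$ follows verbatim with $L$ and $M$ swapped, giving $\flcm(\LM)$.

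The main obstacle I anticipate is the last bookkeeping step: verifying that $\operatorname{lcm}$ of the denominators of the Smith normal form of $\ML$ coincides with $\flcm(\ML)$, the lcm of the denominators of the raw entries. This needs the elementary-divisor argument for the integer matrix $\flcm(\ML)\cdot\ML$ — one shows its Smith normal form's largest entry is coprime to none of the needed primes and, dividing back by $\flcm(\ML)$, recovers exactly $q_i$ with $\operatorname{lcm}(q_i) = \flcm(\ML)$ because clearing denominators and then taking Smith form cannot reduce the "global" denominator (that would contradict $\flcm(\ML)\cdot\ML$ being the smallest integer scaling) nor enlarge it (Smith form entries divide gcd-type combinations of the original entries, all of which have denominators dividing $\flcm(\ML)$). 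A secondary subtlety is checking carefully that adding $\ZZ^n$ before quotienting — i.e. working with $(\ML)\ZZ^n+\ZZ^n$ rather than $(\ML)\ZZ^n$ alone — interacts cleanly with the unimodular reductions; this is fine because the target-side $\ZZ^n$ is $U$-invariant, but it deserves an explicit line. Everything else is standard module theory over $\ZZ$.
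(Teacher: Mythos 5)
Your argument is correct, but it takes a genuinely different and heavier route than the paper. The paper's proof is essentially two lines: the largest invariant factor of a finite abelian group is its exponent, and the exponent of $\mathcal{K}(M)/F^M_0$ is visibly $k=\flcm(\LM)$ because $k\LM$ is integral (so $k$ annihilates every class) while applying any smaller annihilator to the unit vectors $\vec e_i$ would force all columns of that multiple of $\LM$ to be integral, contradicting minimality of the lcm of denominators; the $L$-statement then follows by swapping the roles of $L$ and $M$. You instead identify $\mathcal{K}(L)/F^L_0$ with $\bigl((\ML)\ZZ^n+\ZZ^n\bigr)/\ZZ^n$ and compute its full invariant factor decomposition $\bigoplus_i \ZZ/q_i\ZZ$ via the Smith normal form of the rational matrix $\ML$. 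That identification is right, and the decomposition does give the answer, but it forces you through the bookkeeping step you flag: showing that the largest $q_i$ equals $\flcm(\ML)$, i.e.\ that $\gcd\bigl(\gcd(N),\flcm(\ML)\bigr)=1$ for $N=\flcm(\ML)\cdot\ML$ (otherwise a smaller scaling would already clear denominators). Your sketch of that step is sound but is precisely the content the paper's exponent argument gets for free. What your approach buys in exchange is the complete group structure of $\mathcal{K}(L)/F^L_0$, not just its largest invariant factor, which is more than the lemma needs but would feed directly into results like Theorem~\ref{theorem:zero_fracket_size}.
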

\begin{proof}
     Let $k = \flcm(\LM)$. Then for any integral vector $\vec{v}$, since $k\LM$ is an integral matrix, we have $k \vec{v} \in F^M_0$. We note that $k$ is the smallest integer that has this property, since we can use the unit vectors for $\vec{v}$ as well. So $k$ is the smallest value such that $k\vec{v} = 0$ for all $\vec{v} \in \critgroup(M) / F^M_0$. Thus, $k$ must be the size of the largest invariant factor of $\critgroup(M) / F^M_0$. 

    The other argument comes from applying the claim we just proved on the ii-pair $(M,L)$.
\end{proof}

\begin{example}
Let us revisit the signed graph from our running example.

\begin{center}
        \begin{tikzpicture}[scale=2]
        
        \end{tikzpicture}
\end{center}

Recall that from \cref{ex:Lfracketquotient}, the nonempty frackets are indexed by the following fractional vectors:
\[(0, 0, 0), (0, \nicefrac 1 2, 0), (\nicefrac 2 3, \nicefrac 1 3, 0), (\nicefrac 2 3, \nicefrac 5 6, 0), (\nicefrac 1 3, \nicefrac 1 6, 0), (\nicefrac 1 3, \nicefrac 2 3, 0).\]

The collection of $L$-frackets has group structure $\mathbb Z_6$. One way to see this is by considering $(\nicefrac 1 3, \nicefrac 1 6, 0)$ as a generator. Therefore, the largest invariant factor of $\critgroup(L) / \fracket^L_0$ has size 6. If we look at
\begin{gather*}
    ML^{-1} = \begin{pmatrix}
        \nicefrac 4 3 & - \nicefrac 4 3 & - \nicefrac 1 3 \\
        - \nicefrac 5 6 & \nicefrac 4 3 & - \nicefrac 1 6 \\
        0 & 0 & 1
    \end{pmatrix},
\end{gather*}

we can see that $\flcm(ML^{-1}) = 6$, which indeed matches the size of the largest invariant factor of $\critgroup(L) / \fracket^L_0$.

\end{example}

From \cref{lem:frackets_largest_inv_factors}, we can make a general statement about the size of the zero fracket.

\begin{theorem}\label{theorem:zero_fracket_size}
    Let $(L,M)$ be any $ii$-pair. Let $p_M$ be the product of the invariant factors of $\SK(M)/F_0^M$ excluding the largest invariant factor, and let $p_L$ be the product of the invariant factors of $\SK(M)/F_0^M$ excluding the largest invariant factor. Then, $|\fracket^L_0| = \frac{\gcd(|L|\ML,|M|\LM)}{\gcd(p_M, p_L)}$.
\end{theorem}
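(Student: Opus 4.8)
The plan is to pin down $|F^L_0|$ from two ingredients: the fact that $F^L_0$ is a subgroup of $\mathcal{K}(L)$ (\cref{lem:zerofrackgroup}), so that $|L| = |\mathcal{K}(L)| = |F^L_0|\cdot|\mathcal{K}(L)/F^L_0|$, and the identification of the largest invariant factor of $\mathcal{K}(L)/F^L_0$ as $\flcm(\ML)$ coming from \cref{lem:frackets_largest_inv_factors}. Writing $|\mathcal{K}(L)/F^L_0|$ as the product of \emph{all} its invariant factors, i.e.\ as $\flcm(\ML)\cdot p_L$, gives
\[
|F^L_0| \;=\; \frac{|L|}{\flcm(\ML)\cdot p_L},
\]
and running the identical computation for the ii-pair $(M,L)$, together with \cref{prop:frack_equals_s_frack} (which gives $|F^L_0| = |F^M_0|$), yields the companion identity $|F^L_0| = |M|/(\flcm(\LM)\cdot p_M)$.

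Next I would reinterpret the two integer matrices $|L|\,\ML$ and $|M|\,\LM$ in these terms. Since $|L|\,\ML = M\operatorname{adj}(L)$ is integral, $\flcm(\ML)$ divides $|L|$, so $|L|\,\ML = \frac{|L|}{\flcm(\ML)}\cdot\bigl(\flcm(\ML)\,\ML\bigr)$ expresses it as a positive integer times an integral matrix, whence $\gcd(|L|\,\ML) = \frac{|L|}{\flcm(\ML)}\cdot\gcd\bigl(\flcm(\ML)\,\ML\bigr)$. The crucial claim is that $\flcm(\ML)\,\ML$ is \emph{primitive}, i.e.\ $\gcd\bigl(\flcm(\ML)\,\ML\bigr) = 1$; granting this, $\gcd(|L|\,\ML) = |L|/\flcm(\ML) = |F^L_0|\cdot p_L$, and the symmetric argument for $(M,L)$ gives $\gcd(|M|\,\LM) = |F^M_0|\cdot p_M = |F^L_0|\cdot p_M$. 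Then, since the greatest common divisor of the entries of two matrices is the gcd of their two individual entry-gcd's, $\gcd(|L|\,\ML,\,|M|\,\LM) = \gcd\bigl(|F^L_0|\,p_L,\ |F^L_0|\,p_M\bigr) = |F^L_0|\cdot\gcd(p_M,p_L)$, and solving for $|F^L_0|$ produces the stated formula.

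The heart of the argument, and where I expect the real obstacle to lie, is the primitivity assertion $\gcd\bigl(\flcm(\ML)\,\ML\bigr) = 1$, which is the only step that genuinely uses that $L$ and $M$ are integral. I would attack it by diagonalizing $\ML = ML^{-1}$ simultaneously for the lattices $\ZZ^n$ and $ML^{-1}\ZZ^n$, writing $ML^{-1} = U\,\mathrm{diag}(p_1/q_1,\dots,p_n/q_n)\,V$ with $U,V\in\mathrm{GL}_n(\ZZ)$, each $p_i/q_i$ in lowest terms, and the divisibility chains $p_1\mid\cdots\mid p_n$ and $q_n\mid\cdots\mid q_1$ in force. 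Because the gcd of the entries of a matrix is unchanged by left and right multiplication by unimodular matrices, this reduces to showing $\gcd_i\bigl(p_i\cdot\flcm(\ML)/q_i\bigr) = 1$, a short prime-by-prime valuation computation: primes dividing $\flcm(\ML)$ are disposed of at once by the minimality of $\flcm(\ML)$, so the content is concentrated in the primes coprime to $\flcm(\ML)$, where one must use the slotwise coprimality $\gcd(p_i,q_i)=1$ and the chain conditions to prevent the cleared numerators from accumulating a common factor. Assembling this with the two expressions for $|F^L_0|$ from the first paragraph then completes the proof.
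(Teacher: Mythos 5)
Your overall architecture is the same as the paper's: use \cref{lem:zerofrackgroup} and \cref{lem:frackets_largest_inv_factors} to write $|F^L_0|\,p_L = |L|/\flcm(\ML)$ and (via \cref{prop:frack_equals_s_frack}) $|F^L_0|\,p_M = |M|/\flcm(\LM)$, convert each right-hand side into an entrywise gcd of an integer matrix, and finish with $\gcd(|F^L_0|p_L, |F^L_0|p_M) = |F^L_0|\gcd(p_L,p_M)$. The one place you diverge is that you make explicit, and attempt to prove, the identity the paper simply asserts in the line ``together with $\flcm(\LM) = |M|/\gcd(|M|\LM)$'': in your formulation, the primitivity claim $\gcd\bigl(\flcm(\ML)\,\ML\bigr)=1$. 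You are right that this is the heart of the matter, and right that it is the only step using integrality of $L$ and $M$.

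The gap is that this primitivity claim is false for a general ii-pair, and your proposed prime-by-prime argument cannot close it. Your own Smith-form reduction shows why: with $\ML = U\,\mathrm{diag}(p_1/q_1,\dots,p_n/q_n)\,V$, $p_1\mid\cdots\mid p_n$, $q_n\mid\cdots\mid q_1$, one has $\flcm(\ML)=q_1$ and
\[
\gcd\bigl(\flcm(\ML)\,\ML\bigr) \;=\; \gcd_i\Bigl(\tfrac{q_1}{q_i}\,p_i\Bigr) \;=\; p_1,
\]
since every term is divisible by $p_1$ and the $i=1$ term equals $p_1$. For primes dividing $\flcm(\ML)$ your minimality argument works, but for a prime $p$ coprime to $\flcm(\ML)$ nothing prevents $p\mid p_1$, i.e.\ nothing prevents every entry of the integer matrix $\flcm(\ML)\,\ML$ from being divisible by $p$. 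Concretely, $L=3I$, $M=2I$ is an ii-pair (indeed $M$ is an $M$-matrix and $L$ is invertible and integral) with $\flcm(\ML)=3$ and $\gcd\bigl(3\cdot\tfrac{2}{3}I\bigr)=2$; in this example one checks $|F^L_0|=1$ while the displayed formula gives $6$, so the missing hypothesis is not cosmetic. To salvage the argument you would need to either add the assumption $p_1=1$ (equivalently, that the first elementary divisor of the integer matrix $\flcm(\ML)\,\ML$ is $1$, and likewise for $\LM$) or carry the factor $p_1$ through the computation; the paper's own proof silently assumes the same identity and inherits the same restriction.
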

\begin{proof}
Let $p_M$ denote the product of the invariant factors of $\critgroup(M) / F^M_0$, excluding the largest invariant factor. Then, we see that from \cref{lem:frackets_largest_inv_factors}, together with $\flcm(LM^{-1}) =  |M| / \gcd(|M|\LM)$, we get:
    \begin{align*}
        |F^M_0| p_M &= |F^M_0| \paren{ \dfrac {| \critgroup(M) / F^M_0|} { |M| / \gcd(|M|\LM)} } \\
            &= |F^M_0| \frac{|M| / |F^M_0|}{|M| / \gcd(|M|\LM)} \\
            &= \gcd(|M|\LM).
    \end{align*}
    Similarly, we have $|\fracket^L_0|p_L = \gcd(|L|\ML)$. Using \cref{prop:frack_equals_s_frack}, we have $|\fracket^L_0|p_M = |F^M_0| p_M = \gcd(|M|\LM)$. Combining these results together, we get
    \[|\fracket^L_0| = \frac{\gcd(|L|\ML, |M|\LM)}{ \gcd(p_M, p_L)}.\]
\end{proof}
\begin{example}\label{example:naive_zero_fracket}
    Let us revisit the graph from our running example.

    \begin{center}
        \begin{tikzpicture}[scale=2]
        
        \end{tikzpicture}
    \end{center}

    \begin{multicols}{2}
    \begin{center}
    \begin{tabular}{c | c}
        $L$-Frackets & $M$-Frackets \\
        \hline
        $(0, 0, 0)$ & $(0, 0, 0)$ \\
        $(\nicefrac 1 3, \nicefrac 1 6, 0)$ & $(0, \nicefrac 1 4, 0)$ \\
        $(\nicefrac 2 3, \nicefrac 1 3, 0)$ & $(0, \nicefrac 1 2, 0)$ \\
        $(0, \nicefrac 1 2, 0)$ & $(0, \nicefrac 3 4, 0)$\\
        $(\nicefrac 1 3, \nicefrac 2 3, 0)$ & \\
        $(\nicefrac 2 3, \nicefrac 5 6, 0)$ &
    \end{tabular} \end{center}\columnbreak
    \[|L| ML^{-1} = \begin{pmatrix}
        16 & - 16 & - 4 \\
        - 10 & 16 & -2 \\
        0 & 0 & 2
    \end{pmatrix}\]\[ |M| LM^{-1} = \begin{pmatrix}
        16 & 16 & 8 \\
        10 & 16 & 6 \\
        0 & 0 & 8
    \end{pmatrix}\]
    \end{multicols}
    From the table, we see that $\mathcal{K}(M) / F^M_0 \cong \ZZ_4$, and $\mathcal{K}(L) / \fracket^L_0 \cong \ZZ_6$. Then $p_M = 1$ and $p_L = 1$ since there is only one invariant factor. We also see that the greatest common divisor of $|M|LM^{-1}$ and $|L|ML^{-1}$ is $2$, so we expect the zero fracket to have size $2$. Indeed, $F^L_0$ consists only of the equivalence classes of $(0, 0, 0)$ and $(3, 3, 3)$.
\end{example}

In the above example, we needed to compute the structure of $\mathcal{K}(M) / F^M_0$ and $\mathcal{K}(L) / F^L_0$ in order to obtain the size of the frackets. However, in many cases (conjecturally most cases for chip-firing pairs coming from graphs \cite[Conjecture 2]{clancy2015}), we can bypass this computation. If either $\mathcal{K}(M) / F^M_0$ or $\mathcal{K}(L) / \fracket^L_0$ is cyclic, we get a much simpler expression for $|\fracket^L_0|$.



\begin{corollary}
\label{cor:cyclic_fracket_size}
   Choose an arbitrary ii-pair $(L, M)$. Then, $\critgroup(M) / F^M_0$ is cyclic if and only if $|\fracket^L_0| = \gcd(|M| \LM)$. Similarly, $\critgroup(L) / \fracket_0^L$ is cyclic if and only if $|\fracket^L_0| = \gcd(|L|\ML)$.
\end{corollary}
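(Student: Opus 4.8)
The plan is to extract the statement almost immediately from the two numerical identities that already appear inside the proof of \cref{theorem:zero_fracket_size}. Recall that in that proof we showed $|\fracket^L_0|\, p_M = \gcd(|M|\LM)$ and $|\fracket^L_0|\, p_L = \gcd(|L|\ML)$, where $p_M$ (respectively $p_L$) denotes the product of all invariant factors of $\critgroup(M)/F^M_0$ (respectively $\critgroup(L)/\fracket^L_0$) other than the largest one, and where we have used $|\fracket^L_0| = |F^M_0|$ from \cref{prop:frack_equals_s_frack}. Since all quantities in sight are positive integers, these two identities reduce the corollary to understanding exactly when $p_M = 1$ and when $p_L = 1$.

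The one extra ingredient I would record is the elementary characterization: a finite abelian group is cyclic if and only if the product of its invariant factors excluding the largest equals $1$. Writing the group as $\ZZ_{d_1} \oplus \cdots \oplus \ZZ_{d_r}$ with $1 < d_1 \mid d_2 \mid \cdots \mid d_r$ (and $r = 0$ for the trivial group, which is cyclic), cyclicity is precisely the condition $r \le 1$; in that case the product $d_1 \cdots d_{r-1}$ is an empty product, equal to $1$, while if $r \ge 2$ then $d_1 \ge 2$ forces that product to be at least $2$. Applying this to $\critgroup(M)/F^M_0$ gives: this group is cyclic $\iff p_M = 1 \iff |\fracket^L_0| = \gcd(|M|\LM)$, using the first identity and the fact that $|\fracket^L_0| \neq 0$. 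Running the identical argument with the roles of $L$ and $M$ interchanged, and using the second identity, yields that $\critgroup(L)/\fracket^L_0$ is cyclic $\iff |\fracket^L_0| = \gcd(|L|\ML)$.

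There is essentially no genuine obstacle here; the proof is a two-line deduction once \cref{theorem:zero_fracket_size} is in hand. The only points that merit a sentence of care are bookkeeping ones: fixing the convention that invariant factors are nontrivial divisors each dividing the next (so that "having at most one of them" is equivalent to cyclicity), and checking the degenerate case in which $\critgroup(M)/F^M_0$ or $\critgroup(L)/\fracket^L_0$ is trivial, where the "product excluding the largest invariant factor" is an empty product equal to $1$ and the group is vacuously cyclic, consistent with $|\fracket^L_0| = \gcd(|M|\LM)$ or $|\fracket^L_0| = \gcd(|L|\ML)$ in that case.
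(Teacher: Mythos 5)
Your proposal is correct and is essentially the paper's own argument in different packaging: the paper proves the equivalence directly from Lemma~\ref{lem:frackets_largest_inv_factors} and the identity $\flcm(LM^{-1}) = |M|/\gcd(|M|\LM)$, which is exactly the content of the identities $|\fracket^L_0|\,p_M = \gcd(|M|\LM)$ and $|\fracket^L_0|\,p_L = \gcd(|L|\ML)$ that you extract from the proof of Theorem~\ref{theorem:zero_fracket_size}. Your explicit remark that cyclicity is equivalent to $p_M=1$ (with the empty-product convention) is a clean way to state what the paper phrases as ``the group has exactly one invariant factor.''
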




\begin{proof}
    If $\SK(M)/F_0^M$ is cyclic, then $|\SK(M)/F_0^M| = \flcm(LM^{-1})$ by \cref{lem:frackets_largest_inv_factors}. Thus,

    \begin{gather*}
        \frac{|M|}{|F_0^M|} = |\SK(M)/F_0^M| = \flcm(LM^{-1}) = \frac{|M|}{\gcd(|M|LM^{-1})},
    \end{gather*}
    establishing that $|F_0^M| = \gcd(|M|LM^{-1})$.

    For the other direction of the proof, suppose $|\fracket^M_0| = \gcd(|M| \LM)$. Then, $|\critgroup(M) / F^M_0| = |M| / \gcd(|M| \LM)$. By \cref{lem:frackets_largest_inv_factors}, the largest invariant factor of $\critgroup(M) / F^M_0$ also has size $\flcm (LM^{-1}) = |M| / \gcd(|M| \LM)$. Therefore, $\critgroup(M) / F^M_0$ has exactly one invariant factor, which means it is cyclic.
\end{proof}


Therefore, if there somehow is a guarantee that either one of $\mathcal{K}(M) / F^M_0$ or $\mathcal{K}(L) / \fracket^L_0$ is cyclic, \cref{cor:cyclic_fracket_size} gives us a way to enumerate the size of the frackets in a very simple manner. For example, consider the ii-pair $(L,M)$ coming from a signed graph. If we know that the underlying graph has a cyclic critical group $\mathcal{K}(M)$, then $\mathcal{K}(M) / F^M_0$ must also be cyclic.

\begin{example}\label{example:soph_zero_fracket}
    Let us revisit \cref{example:naive_zero_fracket} using \cref{cor:cyclic_fracket_size}, given that the underlying graph's critical group is cyclic. All we need is to compute $|M|LM^{-1}$ (which we have already done in \cref{example:naive_zero_fracket}). We see that $\gcd(|M|LM^{-1}) = 2$, which matches $|\fracket_0^L|$.
\end{example}


\subsection{Analyzing the number of fixed points in the involution map}

In \cref{def:ivomap} we defined an involution $\ivo$ on the set of superstable configurations of an $M$-matrix, given a chip-firing pair $(L,M)$. As discussed just before \cref{thm:mainbij}, the map $\ivo$ dictates how close our duality map $\chi$ (defined in \cref{thm:mainbij}) is to the usual bijection coming from equivalence classes or the classical duality map. In particular, when $L=M$ (the chip-firing on $M$-matrices) the map $\ivo$ is simply the identity map (so every element of $\ss(M)$ is a fixed point) and our map $\chi$ becomes the classical duality map. Hence, the number of fixed points measures how close we are to the two bijections: if it is high, our map $\chi$ is close to the classical duality map, whereas if it is low, our map is close to the bijection coming from equivalence classes.

In this section, we use the techniques developed in the previous subsections to count the number of fixed points of $\ivo$. 
Recall that the fixed points of $\ivo$ are $\vec s \in \sstab(M)$ such that $\{\LM(\cm - 2\ss)\} = \vec{0}.$ 

\begin{theorem}\label{thm:numsols}
    The number of $\ss \in \sstab(M)$, satisfying $\cm - 2\ss \in F^M_0$ 
    is equal to either $0$ or $|F^M_0|d$, where $d$ is the number of elements of $\mathcal{K}(M) / F^M_0$ with order at most 2.
\end{theorem}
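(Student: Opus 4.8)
The plan is to push the counting problem into the finite abelian group $Q := \critgroup(M)/F^M_0$ and then reduce it to counting ``square roots'' of a single element under the doubling map. First I would recall that each equivalence class modulo $M$ contains exactly one superstable configuration of $M$, so that $\vec s \mapsto [\vec s]_M$ is a bijection from $\sstab(M)$ to $\critgroup(M)$. By \cref{lem:zerofrackgroup}, $F^M_0$ is a subgroup of $\critgroup(M)$, and the condition $\cm - 2\vec s \in F^M_0$ depends only on the class $[\vec s]_M$: replacing $\vec s$ by $\vec s + M\vec z$ changes $\cm - 2\vec s$ by $M(2\vec z) \in \Img M$. Hence the number of $\vec s \in \sstab(M)$ with $\cm - 2\vec s \in F^M_0$ equals
\[
N := \#\bigl\{\, x \in \critgroup(M) : [\cm]_M - 2x \in F^M_0 \,\bigr\}.
\]
Writing $\pi : \critgroup(M) \to Q$ for the quotient map and $q := \pi([\cm]_M)$, the membership condition becomes $2\,\pi(x) = q$ in $Q$.

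Next I would stratify $N$ by the value of $\pi(x)$. Since every fiber of $\pi$ has exactly $|F^M_0|$ elements,
\[
N \;=\; |F^M_0| \cdot \#\bigl\{\, \bar x \in Q : 2\bar x = q \,\bigr\}.
\]
It then remains to count solutions of $2\bar x = q$ in the finite abelian group $Q$. The doubling map $\delta\colon Q \to Q$, $\delta(y) = 2y$, is a group homomorphism with kernel $Q[2] := \{\, y \in Q : 2y = 0 \,\}$, which is exactly the set of elements of $Q$ of order at most $2$; thus $|Q[2]| = d$. If $q \notin 2Q$, there is no solution and $N = 0$. Otherwise, fixing one solution $\bar x_0$, the full solution set is the coset $\bar x_0 + Q[2]$, which has $d$ elements, so $N = |F^M_0|\,d$. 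This gives the claimed dichotomy.

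I do not expect a real obstacle here: once the problem is transported to $Q$, the content is just the elementary group theory of $\delta$. The two points that need a little care are verifying that ``$\cm - 2\vec s \in F^M_0$'' is genuinely a condition on the class $[\vec s]_M$ (so that the count over $\sstab(M)$ matches the count over $\critgroup(M)$) and observing that the uniform fiber size of $\pi$ lets $|F^M_0|$ factor out cleanly. I would also note in passing that the value $0$ occurs precisely when $\pi([\cm]_M) \notin 2Q$; the statement does not ask us to resolve which alternative holds, so I would leave that as the natural next question rather than chase a criterion for it.
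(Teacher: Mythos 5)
Your proposal is correct and follows essentially the same route as the paper: both arguments reduce the count to the quotient $\mathcal{K}(M)/F^M_0$, observe that once one solution exists the full solution set is parametrized by the $|F^M_0|$ elements of each fiber together with the $d$ elements of order at most $2$ in the quotient, and conclude the count is $0$ or $|F^M_0|d$. Your phrasing via the doubling homomorphism and its kernel is a slightly more explicit packaging of the paper's coset decomposition $g = s + h + f$, but the substance is identical.
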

\begin{proof} 
    Suppose that there exists some $s \in \sst(M)$ such that $\cm - 2s \in F^M_0$. Note that every element $g \in \SK(M)$ is uniquely expressible as $s + h + f$ for some $h \in \SK(M)/F^M_0$, $f \in F^M_0$. Then, $\cm - 2g = (\cm - 2s) - 2f - 2h$.

    Since $\cm - 2s \in F^M_0$ and $2f \in F^M_0$, we have that $\cm - 2g \in F^M_0$ 
    if and only if $2h \in F^M_0$. There are $|F_0^M|$ possible choices for $f$, and $d$ possible choices for $h$, so there are $|F_0^M|d$ such options for $g$.
\end{proof}

\begin{example}
Let us revisit the running example of a $(L,M)$-pair coming from a signed graph.
    \begin{center}
        \begin{tabular}{c | c}
        $\sstab(M)$ & Its image (under $LM^{-1}$) \\
        \hline
        $(0, 0, 0)$ & $(0, 0, 0)$ \\
        $(0, 0, 1)$ & $(1, \nicefrac 3 4, 1)$ \\
        $(0, 0, 2)$ & $(2, \nicefrac 3 2, 2)$ \\
        $(0, 1, 0)$ & $(2, 2, 0)$ \\
        $(0, 1, 1)$ & $(3, \nicefrac{11}{4}, 1)$ \\
        $(1, 0, 0)$ & $(2, \nicefrac 5 4, 1)$ \\
        $(1, 1, 0)$ & $(4, \nicefrac{13}{4}, 1)$ \\
        $(2, 0, 0)$ & $(4, \nicefrac 5 2, 2)$ \\
        \hline
        $\cm = (2, 1, 2)$ & $(8, 6, 4)$
        \end{tabular}
    \end{center}

    From the above table, we see that $\cm \in F^M_0$. Therefore, the $4$ superstable preimages that $2\ss$ is in the zero fracket of $M$ will be the fixed point of $\mu$.

    Recall that $\mathcal{K}(M) = \ZZ_8$ and $|F^M_0| = 2$ (from \cref{example:soph_zero_fracket}). Therefore, $\mathcal{K}(M) / F^M_0 \cong \ZZ_4$, which has $2$ elements of order at most $2$. Using $\cref{thm:numsols}$ with $d = 2$ and $|F^M_0| = 2$, we can verify that we get $4$ fixed points.
\end{example}

The remainder of this subsection will be on other more specific observations and corollaries of \cref{thm:numsols} and \cref{cor:cyclic_fracket_size}.

\begin{proposition}\label{oddorder}
    If $\cm$ has odd order in $\mathcal{K}(M) / F^M_0$, then the number of fixed points of $\ivo$ is nonzero.
\end{proposition}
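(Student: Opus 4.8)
The plan is to deduce this immediately from \cref{thm:numsols}. That theorem says the number of fixed points of $\ivo$ — equivalently, the number of $\vec s \in \sstab(M)$ with $\cm - 2\vec s \in F^M_0$ — is either $0$ or $|F^M_0|\,d$, where $d$ is the number of elements of $\mathcal{K}(M)/F^M_0$ of order at most $2$. Since $|F^M_0| \geq 1$ and $d \geq 1$ (the identity coset always has order at most $2$), the quantity $|F^M_0|\,d$ is positive, so it suffices to rule out the value $0$; that is, I just need to exhibit a single $\vec s \in \sstab(M)$ with $\cm - 2\vec s \in F^M_0$.

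Because each $\equiv_M$-class contains a unique superstable representative (the uniqueness half of \cref{thm:class}, specialized to $(M,M)$), choosing such an $\vec s$ is the same as choosing a class $g \in \mathcal{K}(M)$ with $[\cm] - 2g \in F^M_0$; passing to the quotient $A := \mathcal{K}(M)/F^M_0$, what is needed is exactly that the image of $\cm$ in $A$ lie in $2A$. This is now the standard fact that in a finite abelian group, any element of odd order is $2$-divisible: if $a \in A$ has odd order $k$, then $\gcd(2,k) = 1$, so we may pick an integer $m$ with $2m \equiv 1 \pmod{k}$, and then $2(ma) = (2m)a = a$ since $ka = 0$; hence $a \in 2A$. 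Applying this with $a$ the image of $\cm$ in $A$ — which has odd order by hypothesis — produces the desired $g$, and taking the superstable representative $\vec s$ of the class $g$ gives $\cm - 2\vec s \in F^M_0$. Therefore the fixed-point count is nonzero.

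There is essentially no obstacle here: the content is the one-line group-theoretic observation above together with \cref{thm:numsols}. The only things to keep straight are that $\sstab(M)$ is genuinely a transversal of $\mathcal{K}(M)$, so that we may freely translate between superstable configurations and group elements, and that ``order'' in the hypothesis is measured in $\mathcal{K}(M)/F^M_0$ — the same group whose order-$\le 2$ elements are counted by $d$ in \cref{thm:numsols} — rather than in $\mathcal{K}(M)$ itself. With those conventions fixed, the proposition follows.
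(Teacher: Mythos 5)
Your proof is correct and follows essentially the same route as the paper's: both reduce to showing that the image of $\cm$ in $\mathcal{K}(M)/F^M_0$ is twice some element, using the fact that an element of odd order in a finite abelian group is $2$-divisible (the paper argues via injectivity of doubling on the odd-order cyclic subgroup generated by $\cm$, you via an explicit inverse of $2$ modulo the order --- the same observation). Your version is slightly more careful in spelling out the passage between superstable representatives and classes in the quotient, and in invoking \cref{thm:numsols} explicitly, but there is no substantive difference.
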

\begin{proof}
    Let $H$ be the subgroup of $\mathcal{K}(M) / F^M_0$ generated by $\cm$. 
    Since $\cm$ has odd order, we know that $|H|$ is odd. 
    There is no element of $H$ with order 2, so it follows that there is some element $g \in H$ such that $2g = \cm$. Then, $g$ is a fixed point.
\end{proof}

If $\mathcal{K}(M) / F^M_0$ is cyclic and $\cm$ has even order, we have an exact criterion to check whether the number of fixed points is zero or not.

\begin{corollary}
    Suppose $\mathcal{K}(M) / F^M_0$ is cyclic and $\cm$ has even order in $\mathcal{K}(M) / F^M_0$. Then, the number of fixed points of $\ivo$ is nonzero if and only if $\frac{|\mathcal{K}(M) / F^M_0|}{\ord(\cm)}$ is even.
\end{corollary}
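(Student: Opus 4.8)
The plan is to reduce the statement to an elementary computation in the cyclic group $C := \mathcal{K}(M)/F^M_0$. Recall that the fixed points of $\ivo$ are exactly the $\ss \in \sstab(M)$ with $\cm - 2\ss \in F^M_0$, and that membership in the zero fracket $F^M_0$ is well-defined on $\mathcal{K}(M)$ by \cref{lem:equivalencefrackets}. Hence, by \cref{thm:numsols}, the number of fixed points is nonzero if and only if there exists $\bar g \in C$ with $2\bar g = \bar c$, where $\bar c$ denotes the image of $\cm$ in $C$; that is, the whole question is whether $\bar c$ lies in the image of the doubling homomorphism $C \to C$.

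Next I would fix an isomorphism $C \cong \mathbb{Z}/N\mathbb{Z}$, where $N = |\mathcal{K}(M)/F^M_0|$, and set $m = \ord(\cm)$. Since $m$ is even and $m \mid N$, the order $N$ is also even. Under the isomorphism $\bar c$ corresponds to a residue of the form $k\cdot(N/m)$ with $0 \le k < m$ and $\gcd(k,m)=1$, since these are precisely the elements of order $m$ in $\mathbb{Z}/N\mathbb{Z}$. The key observation is that because $m$ is even, the condition $\gcd(k,m)=1$ forces $k$ to be odd.

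Finally, since $N$ is even, the image of the doubling map on $\mathbb{Z}/N\mathbb{Z}$ is exactly the index-$2$ subgroup of even residues, so $2\bar g = \bar c$ is solvable if and only if the integer $k\cdot(N/m) \in \{0,\dots,N-1\}$ is even; as $k$ is odd, this holds if and only if $N/m = |\mathcal{K}(M)/F^M_0|/\ord(\cm)$ is even, which is the claimed criterion. The argument is mostly bookkeeping in a cyclic group; the only points needing care are extracting that the representative $k$ of $\bar c$ is odd — this is where the hypothesis that $\ord(\cm)$ is even (not merely $|C|$ even) is used — and noting that the "solvable iff even" step relies on $N$ being even, itself a consequence of $\ord(\cm)$ being even.
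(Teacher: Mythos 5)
Your proof is correct. It starts from the same reduction the paper uses --- via \cref{thm:numsols}, the fixed-point count is nonzero iff $2\bar g = \bar c$ is solvable in the cyclic quotient $C = \mathcal{K}(M)/F^M_0$ --- but the arithmetic afterward is organized differently and handles both implications in one stroke. You coordinatize $C \cong \mathbb{Z}/N\mathbb{Z}$, write $\bar c = k\cdot(N/m)$ with $\gcd(k,m)=1$ (so $k$ is odd precisely because $m=\ord(\cm)$ is even), and note that for even $N$ the image of the doubling map is exactly the even residues; solvability then reads off as the parity of $N/m$. The paper instead proves the two directions separately: for sufficiency it writes $\cm = kg$ for a generator $g$, deduces $(N/m)\mid k$, hence $k$ even, hence $(k/2)g$ is a fixed point; for necessity it performs an order analysis on a solution $g$ of $2g=\cm$, showing $\ord(g)\in\{\ord(\cm),\,2\ord(\cm)\}$ and ruling out the first option because it would force $\ord(\cm)$ odd. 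Your single characterization of the image of doubling replaces that case analysis and is arguably cleaner and more transparent about where each hypothesis (cyclicity, evenness of $\ord(\cm)$, evenness of $N$) enters; the paper's version avoids choosing the explicit normal form $k(N/m)$ for $\bar c$. All the individual steps in your write-up check out, including the characterization of the order-$m$ elements of $\mathbb{Z}/N\mathbb{Z}$ and the fact that $k(N/m)$ lies in $\{0,\dots,N-1\}$, so the parity test is applied to the correct representative.
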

\begin{proof}
    Suppose $g$ generates $\mathcal{K}(M) / F^M_0$, so $\cm = kg$ for some positive integer $k$. Since $\ord(\cm) k g = 0$ and $\mathcal{K}(M) / F^M_0$ is cyclic, we know that $|\mathcal{K}(M) / F^M_0|$ divides $\ord(\cm)k$. Therefore, $|\mathcal{K}(M) / F^M_0| / \ord(\cm)$ must divide $k$.

    Since $\frac{|\mathcal{K}(M) / F^M_0|}{\ord(\cm)}$ is even, we see that $k$ must be even as well. Therefore, $(k/2)g$ is a fixed point of $\ivo$.
    
    For the other direction of the proof, assume there is some fixed point $g$ of $\ivo$. then we have $2g = \cm$ in $\mathcal{K}(M) / F^M_0$. We know that $2 \ord(\cm)g = 0$, so $\ord(g)$ must divide $2 \ord(\cm)$. Let $H$ be the subgroup of $\mathcal{K}(M) / F^M_0$ generated by $g$. Then, $\cm \in H$ and $|H|$ cyclic, so $\ord(\cm)$ must divide $|H| = \ord(g)$. So we have $\ord(g) | 2\ord(\cm)$ and $\ord(\cm) | \ord(g)$. Thus, one of the two happens: $\ord(g) = \ord(\cm)$ or $\ord(g) = 2 \ord(\cm)$.

    Suppose for the sake of contradiction that $\ord(g) = \ord(\cm)$. In other words, $\ord(g) = \ord(2g)$. This means that $\ord(g)$ is odd. By our assumption that $\ord(g) = \ord(\cm)$, it follows that $\ord(\cm)$ is also odd, contradicting our assumption that $\cm$ is of even order.

    Therefore, $\ord(g) = 2\ord(\cm)$. Then $\frac{|\mathcal{K}(M) / F^M_0|}{\ord(\cm)} = 2 \frac{|\mathcal{K}(M) / F^M_0|}{\ord(g)}$, so $\frac{|\mathcal{K}(M) / F^M_0|}{\ord(\cm)}$ must be even.
\end{proof}






\subsection{Chip-firing pairs coming from complete graphs}
In this subsection, we will be focusing on chip-firing pairs coming from signed graphs, where the underlying graph is the complete graph. Same as the chip-firing pairs coming from signed graphs we have seen before, $L$ is going to denote the (reduced) Laplacian of the signed graph and $M$ is going to denote the (reduced) Laplacian of the underlying complete graph $K_n$, where $n$ is even.

\begin{lemma}\label{KLMinv.den}
    For such $(L,M)$ pairs coming from a complete graph $K_n$ where $n$ is even,
    \[\frac{n}{2}\LM \text{ is an integer matrix.}\]
\end{lemma}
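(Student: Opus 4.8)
The plan is to exploit the rigid structure of the reduced Laplacian of a complete graph. Write $I$ and $J$ for the $(n-1)\times(n-1)$ identity matrix and all-ones matrix. Since every vertex of $K_n$ has degree $n-1$ and $K_n$ is simple, the reduced Laplacian is $M = nI - J$, independent of the choice of sink. Using $J^2 = (n-1)J$ one checks directly that $(nI-J)\cdot\frac1n(I+J) = I$, so $M^{-1} = \frac1n(I+J)$.

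Next I would record how $L$ relates to $M$. For a signed graph whose underlying graph is $K_n$, with the convention used in the running example (\cref{table:signed_toy_ss}), the diagonal of $L$ records the vertex degrees, which do not depend on the signs, and each off-diagonal entry is $+1$ on a negative edge and $-1$ on a positive edge. Hence $L = M + 2S$, where $S$ is the symmetric $0$–$1$ matrix with $S_{ij} = 1$ exactly when $v_iv_j$ is a negative edge and $S_{ii}=0$; in particular $S$ is an integer matrix.

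Now combine the two facts. Since $LM^{-1} = (M+2S)M^{-1} = I + 2SM^{-1}$, we get
\[
\tfrac n2\, LM^{-1} \;=\; \tfrac n2 I + n\,SM^{-1} \;=\; \tfrac n2 I + S(I+J) \;=\; \tfrac n2 I + S + SJ .
\]
Here $\tfrac n2 I$ is an integer matrix because $n$ is even, $S$ is integral by construction, and $SJ$ is integral because its $(i,j)$ entry is $\sum_{k} S_{ik}$, the number of negative edges incident to $v_i$ among the non-sink vertices. Adding these three integer matrices shows $\tfrac n2 LM^{-1}$ is an integer matrix, as claimed.

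I do not expect a genuine obstacle: the argument is a substitution once the two preliminary identities are in place. The only points that need care are computing $M^{-1}=\frac1n(I+J)$ correctly (the off-diagonal coefficient, and keeping track that the matrices are $(n-1)\times(n-1)$) and pinning down the sign and diagonal conventions for $L$ so that the clean decomposition $L = M + 2S$ is valid; both are routine.
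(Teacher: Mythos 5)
Your argument is correct, and it reaches the conclusion by a genuinely different route than the paper. The paper also starts from the explicit inverse $M^{-1}_{ij}=2/n$ on the diagonal and $1/n$ off it, but then computes each entry of $LM^{-1}$ directly, splitting into three cases ($i=j$, $L_{ij}=-1$, $L_{ij}=+1$) and checking in each case that the numerator of the resulting fraction over $n$ is even. You instead isolate the role of the signs through the decomposition $L=M+2S$ with $S$ the $0$--$1$ matrix of negative edges among non-sink vertices, which collapses the whole computation to the single identity $\tfrac n2 LM^{-1}=\tfrac n2 I+S+SJ$ and eliminates the case analysis. Both proofs are valid; yours is shorter and makes transparent exactly where the parity of $n$ enters (only in the term $\tfrac n2 I$ --- the correction $S(I+J)$ coming from the signs is integral for every $n$), while the paper's entrywise computation has the minor advantage of producing the explicit values of the entries of $LM^{-1}$, which could be reused elsewhere. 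The two points you flag as needing care (the coefficient in $M^{-1}=\tfrac1n(I+J)$, verified via $J^2=(n-1)J$ on $(n-1)\times(n-1)$ matrices, and the sign convention making $L-M=2S$ with zero diagonal) are handled correctly and match the paper's conventions.
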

\begin{proof}
    We will show that every term in $\LM$ has a denominator dividing $n/2$. The entries of the inverse of $M$ can be described as the following:
    \[\Mi_{i,j} = \begin{cases}
        \frac{2}{n} & \text{ if } i = j \\
        \frac{1}{n} & \text{ if } i \neq j.
    \end{cases}\]

    From this we analyze the entry $(LM^{-1})_{i,j} = \sum_{k = 1}^{n-1} L_{i,k}\Mi_{k,j}$. Let $\alpha_i$ to denote the number of $-1$'s that appear
    in row $i$. Then there will be exactly $n-2-\alpha_i$ many $+1$'s in that row. We do a case-by-case analysis. 

    In the case $i=j$, we have that 
    $$(\LM)_{i,j} =\sum_{k = 1}^{n-1} L_{i,k}\Mi_{k,j}= \frac{2(n-1)}{n}-\frac{\alpha_i}{n}+\frac{n-2-\alpha_i}{n} = \frac{3n-4-2\alpha_i}{n}.$$ If $L_{i,j} = -1$, then we have $$(\LM)_{i,j} = \sum_{k = 1}^{n-1} L_{i,k}\Mi_{k,j}= \frac{n-1}{n}-\frac{2}{n} - \frac{\alpha_i-1}{n} +\frac{n-2-\alpha_i}{n} = \frac{2n-4-2\alpha_i}{n}.$$ Finally when $L_{i,j} = 1$ we have
    \[(\LM)_{i,j} = \sum_{k = 1}^{n-1} L_{i,k}\Mi_{k,j}= \frac{n-1}{n} + \frac{2}{n} - \frac{\alpha_i}{n} +\frac{n-3-\alpha_i}{n} = \frac{2n-2-2\alpha_i}{n}.\]    
    Since $n$ is even, the numerators in all cases are also even. So $\frac{n}{2}\LM$ is an integer matrix.
\end{proof}

Using the above, we now show that any signed graph of a complete graph $K_n$ where $n$ is a fixed even number, contain a nontrivial common subgroup. Actually we show something stronger, that the zero frackets (a subgroup of the critical group) of any signed graph with underlying graph being the complete graph $K_n$ where $n$ is even, contain a nontrivial common subgroup.

\begin{theorem}
\label{thm:z0common}
The zero fracket $F_0^L$ of any $(L,M)$-pair coming from some signed graph where the underlying graph is the complete graph $K_n$ where $n$ is even, has a subgroup isomorphic to $\ZZ_2^{n-2}$.
\end{theorem}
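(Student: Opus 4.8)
The plan is to move the problem onto the critical group of the underlying complete graph, where the structure is completely explicit, and then invoke \cref{KLMinv.den}. Since $(L,M)$ is in particular an $ii$-pair, \cref{prop:frack_equals_s_frack} gives $F_0^L \cong F_0^M$, so it is enough to produce a subgroup of $F_0^M$ isomorphic to $\ZZ_2^{n-2}$. Here $M$ is the reduced Laplacian of $K_n$, so $\mathcal{K}(M)$ is the critical group of $K_n$; it is classical (e.g. from the Matrix--Tree theorem together with Cayley's count $n^{n-2}$ of spanning trees) that the reduced Laplacian of $K_n$ has Smith normal form $\operatorname{diag}(1,n,n,\dots,n)$ with $n-2$ entries equal to $n$, so that $\mathcal{K}(M) \cong \ZZ_n^{n-2}$.

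The key step is the following. By \cref{KLMinv.den}, since $n$ is even the matrix $\tfrac{n}{2}\LM$ has integer entries. Hence for every $\vec v \in \ZZ^{n-1}$ the vector $\tfrac{n}{2}\vec v$ is integral and
\[
\LM\bigl(\tfrac{n}{2}\vec v\bigr) \;=\; \bigl(\tfrac{n}{2}\LM\bigr)\vec v \;\in\; \ZZ^{n-1},
\]
so $\bigl\{\LM(\tfrac{n}{2}\vec v)\bigr\} = \vec 0$, which by the definition of the zero fracket says exactly that $[\tfrac{n}{2}\vec v]_M \in F_0^M$. As the quotient map $\ZZ^{n-1}\to\mathcal{K}(M)$ is a group homomorphism, this means that the subgroup $\tfrac{n}{2}\,\mathcal{K}(M) := \{\,\tfrac{n}{2}\, g : g \in \mathcal{K}(M)\,\}$ is contained in $F_0^M$.

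Finally I would identify this subgroup abstractly. Under $\mathcal{K}(M)\cong \ZZ_n^{n-2}$, multiplication by $\tfrac{n}{2}$ sends each cyclic summand $\ZZ_n$ onto $\{0,\tfrac{n}{2}\}$, its unique subgroup of order $2$ (using that $\tfrac{n}{2}\not\equiv 0 \pmod n$ for even $n\ge 2$), and it kills the trivial $\ZZ_1$ invariant factor. Therefore $\tfrac{n}{2}\,\mathcal{K}(M) \cong \ZZ_2^{n-2}$, so $F_0^M$ — and via the isomorphism of \cref{prop:frack_equals_s_frack} also $F_0^L$ — contains a subgroup isomorphic to $\ZZ_2^{n-2}$, as claimed.

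Given \cref{KLMinv.den} there is no serious obstacle; the only points needing care are recalling the precise group structure $\mathcal{K}(K_n)\cong\ZZ_n^{n-2}$ (in particular that the $\ZZ_1$ invariant factor drops out and does not reduce the rank of the image of multiplication by $\tfrac{n}{2}$) and checking that $\tfrac{n}{2}$ is a genuinely nonzero element of each $\ZZ_n$ summand, so that the image has full $2$-rank $n-2$ rather than something smaller.
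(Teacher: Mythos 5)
Your proof is correct, but it takes a genuinely different route from the paper's. The paper works with explicit elements: it sets $\vec s_i = \tfrac{n}{2}\vec e_i$, uses \cref{KLMinv.den} to place these in $R^+$, shows each has order $2$ via the identity $n\vec e_i = M(\vec 1 + \vec e_i)$, and then proves that the $2^{n-2}$ sums $\sum_{i\in I}\vec s_i$ over index sets with $|I|\le\tfrac{n-2}{2}$ are pairwise inequivalent by checking directly that each is a superstable preimage (distinct superstables lie in distinct classes), which yields an explicit copy of $\ZZ_2^{n-2}$. You instead observe that integrality of $\tfrac{n}{2}\LM$ forces the entire image of multiplication by $\tfrac{n}{2}$ on $\mathcal{K}(M)$ into $F_0^M$, and then read off that this image is $\ZZ_2^{n-2}$ from the classical structure $\mathcal{K}(K_n)\cong\ZZ_n^{n-2}$, transporting the subgroup to $F_0^L$ via \cref{prop:frack_equals_s_frack}. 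Your argument is shorter and avoids the combinatorial superstability check entirely, at the cost of importing the Smith normal form of the reduced Laplacian of $K_n$ as an external fact; note that your parenthetical justification (Matrix--Tree plus Cayley's formula) only pins down the \emph{order} $n^{n-2}$ of $\mathcal{K}(K_n)$, not its isomorphism type, so you should cite or verify the full statement $\mathcal{K}(K_n)\cong\ZZ_n^{n-2}$ separately --- it is standard and true, so this is a presentational point rather than a gap. The paper's construction buys explicit superstable representatives for every element of the subgroup, which fits its emphasis on preimages, while yours identifies the subgroup structurally as $\tfrac{n}{2}\,\mathcal{K}(M)$.
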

\begin{proof}
    Consider the configuration $\vec s_i := \frac{n}{2} \vec{e_i}$. Then by 
    \cref{KLMinv.den}, we have that $\LM\paren{\frac{n}{2}\vec{e_i}}$ is an integer vector, so this is a 
    valid configuration in $R^+$. From \cref{thm:22floor}, we have that $\vec s_i$ is a superstable perimage. Additionally, $n \vec{e_i} = M(\vec{1} + \vec{e_i})$, so $\vec s_i$ has order 2 in $F_0^L$.

    For each $I \subseteq \{1\dots n\}$ such that 
    $0 \leq |I| \leq \frac{n-2}{2}$, we know that
    $\sum_{i \in I} s_i$ is a superstable preimage: when we fire some set $S$, for any $v \in S$ we start with either zero or $\frac{n}{2}$ chips, will lose $n-1$ chips, and gain back at most $|I|-1 \leq \frac{n-4}{2}$ chips, resulting in that vertex having at most $-1$ chips after the firing.
    And since $\sum_{i \in I} s_i$ is a sum of elements of order $2$, it has order $2$ as well in $F_0^L$.

    There are $\frac{2^{n-1}}{2}$ possible choices for $I$ such that $0 \leq |I| \leq \frac{n-2}{2}$, so the set of $\sum_{i \in I} s_i$'s form a subgroup of $F_0^L$ isomorphic to $\ZZ_2^{n-2}$.
\end{proof}



\begin{example}
    Consider the graph $K_6$. Over all possible signed graphs coming from $K_6$, there are seven possible critical groups up to isomorphism:
\begin{gather*}
    \mathbb Z_6 \oplus \ZZ_6 \oplus \ZZ_6 \oplus \ZZ_6, \\
    \ZZ_{36} \oplus \ZZ_{12} \oplus \ZZ_2 \oplus \ZZ_2, \\
    \ZZ_{78} \oplus \ZZ_6 \oplus \ZZ_2 \oplus \ZZ_2, \\
    \ZZ_{50} \oplus \ZZ_{10} \oplus \ZZ_{2} \oplus \ZZ_{2}, \\
    \ZZ_{64} \oplus \ZZ_8 \oplus \ZZ_2 \oplus \ZZ_{2}, \\
    \ZZ_{132} \oplus \ZZ_4 \oplus \ZZ_2 \oplus \ZZ_{2}, \\
    \ZZ_{36} \oplus \ZZ_4 \oplus \ZZ_4 \oplus \ZZ_4.
\end{gather*}
We can check that $\mathbb Z_2^4$ is a subgroup of all cases above.
\end{example}


\section{Further questions}
In this section, we discuss the remaining questions that naturally follow this study.


The duality between $z$-superstable and critical configurations of $(L,M)$-pairs constructed in Section $3$, relied on the involution $\ivo$ on the set of superstable configurations of $M$. When we constructed $\ivo$, we used the equivalence relation given by $M$. It would be interesting if we can skip that process as well.

\begin{question}
Can one construct a duality between the set of $z$-superstable configurations and the set of critical configuration of chip-firing pairs without ever relying on the equivalence class computation (of $L$ or $M$)?    
\end{question}

A lot of examples we used came from signed graphs and their Laplacian. It would be interesting if we can generalize \cref{thm:z0common} to more general graphs:

\begin{question}
Let $(L,M)$ be a chip-firing pair coming from a signed graph with an underlying graph $G$. Is there a way to obtain the biggest group that all critical groups of the chip-firing pairs coming from $G$ contain as a subgroup up to isomorphism?
\end{question}

For signed graphs, there is an operation called \newword{vertex-switching} \cite{Zaslavsky}. It is known that there is an isomorphism between the critical groups of signed graphs that are switching equivalent to one another. So it is natural to ask the following question:

\begin{question}
 Can one construct a natural isomorphism between the critical groups of switching equivalent $(L, M)$ pairs (without any reliance on the obvious bijection coming from the equivalence classes)?
\end{question}

Recall that in \cref{thm:numsols} we could only describe the number of fixed
points of $\ivo$ when we were guaranteed that it was nonzero. It would be interesting if we have an elegant way to describe exactly when the number of fixed points of $\ivo$ is nonzero in terms of $L$ and $M$:
\begin{question}
Is there a way to describe exactly when the map $\ivo$ has zero fixed points?
\end{question}

Lastly, although in this paper we strictly focused on $M$-matrices having integer entries, the chip-firing pairs allow $M$-matrices having real entries (the matrix $L$ still has to be integral). It would be interesting if one can come up with a duality for non-integral $M$-matrices, and extend it to chip-firing pairs in a similar way:

\begin{question}
Is there a nice duality map between critical and superstable configurations for non-integral $M$-matrices? 
\end{question}

\subsection*{Acknowledgements}
 The work presented here was conducted as part of an REU at Texas State University in the summer of 2024, sponsored by NSF. We thank NSF and Texas State for the support and the stimulating work environment.
    
\printbibliography
\end{document}